\newcommand{\Sym}{\mathfrak{S}}
\newcommand{\Csym}{\mathfrak{C}}
\newcommand{\Slambda}{S}
\newcommand{\C} {\mathbb{C}}
\newcommand{\Q} {\mathbb{Q}}
\newcommand{\N}  {\mathbb{N}}
\newcommand{\F}{\mathbb{F}}
\newcommand{\Z}{\mathbb{Z}}
\newcommand{\p}{\mathfrak{p}}
\newcommand{\PP}{\mathbb{P}}
\newcommand{\NS}{\mathop{\rm NS}}
\newcommand{\Num}{\mathop{\rm Num}}
\newcommand{\Aut}{\mathop{\rm Aut}}
\newtheorem{Theorem}{Theorem}[section]
\newtheorem{Proposition}[Theorem]{Proposition}
\newtheorem{Lemma}[Theorem]{Lemma}
\newtheorem{Corollary}[Theorem]{Corollary}
\theoremstyle{remark}
\newtheorem{Remark}[Theorem]{Remark}
\theoremstyle{definition}
\begin{document}

\title[The Barth quintic surface has Picard number 41]{The Barth quintic surface has Picard number 41}

\dedicatory{Dedicated to Wolf Barth}

\author{Slawomir Rams}
\address{Institute of Mathematics , Jagiellonian University, 
ul. {\L}ojasiewicza 6,  30-348 Krak\'ow, Poland} 
\email{slawomir.rams@uj.edu.pl}

\author{Matthias Sch\"utt}
\address{Institut f\"ur Algebraische Geometrie, Leibniz Universit\"at
  Hannover, Welfengarten 1, 30167 Hannover, Germany}
\email{schuett@math.uni-hannover.de}

\subjclass[2010]{14J29; 11G25, 14C22, 14G15, 14J27, 14J28}
\keywords{Surface of general type, lines, Picard number, K3 surface, Godeaux surface, Tate conjecture}
\thanks{Partial funding   by  MNiSW grant no. N N201 388834
and by ERC StG~279723 (SURFARI)
 is gratefully acknowledged}
%
%

\date{July 1, 2012}

 \begin{abstract}
This paper investigates a specific smooth quintic surface suggested by Barth for it contains the current record of 75 lines over the complex numbers. Our main incentive is to prove that the complex quintic has Picard number 41, and to compute the N\'eron-Severi group up to a $2$-power index. We also compute Picard numbers for reductions to positive characteristic and verify the Tate conjecture.
 \end{abstract}
 \maketitle


%

\section{Introduction}

Quintic surfaces in $\PP_3$ have been studied extensively by Barth and others,
for instance with a view towards configurations of singularities or lines contained in them.
This paper investigates a specific smooth quintic surface suggested by Barth 
for it contains the current record of 75 lines over $\C$ (see also \cite{xie2010}).
In what follows the surface will be denoted by $S_a$.
Our main incentive is to prove that over $\C$ the quintic $S_a$ has Picard number 41
(Theorem \ref{thm}).
To the best of our knowledge this is the record Picard number for smooth quintics.
In fact the surfaces with Picard number 43 or 45 exhibited in \cite{S-quintic}
involved several rational double point singularities.
The previous record of 37 was attained by the Fermat quintic surface
which also contains 75 lines (Remark \ref{rem:5}).

This note is organised as follows.
Section \ref{s:surf}  reviews the surface $S_a$ inside a pencil of quintics
with an action of the symmetric group $\Sym_5$.
Sections \ref{s:Godeaux} and \ref{s:K3} derive lower and upper bounds for the Picard number 
by exhibiting certain quotient surfaces (Godeaux and K3).
As a by-product we prove the Tate conjecture for any irreducible member of the pencil of quintics 
(cf.~Proposition \ref{prop:tate}).
Throughout we keep the exposition as characteristic free as possible.
This enables us to apply the supersingular reduction technique from \cite{SSvL}
to calculate the N\'eron-Severi group of $S_a$ up to index $2^i$ for some $i\leq 4$
in Section \ref{s:NS}.
We also work out an explicit non-classical Godeaux surface (Proposition \ref{prop:5})
compared to Miranda's implicit results in \cite{Miranda}.

\section{A pencil of $\Sym_5$-invariant quintics in $\PP_3$} \label{sect-pencil}
\label{s:surf}

In this note we consider certain surfaces that belong to the pencil of quintics
\begin{equation} \label{eq-pencil}
S_{\lambda}  \quad :  \quad  \left\{s_1 = \frac 56 \lambda s_2 \cdot s_3  + s_5   = 0\right\}\subset\PP_4  ,  \quad \lambda \in K \, ,  
\end{equation}
where $s_k$ stands for the symmetric polynomial
$$
s_k := x_0^k + x_1^k + x_2^k + x_3^k + x_4^k \;\;\;\;\; (k\in\N)
$$ 
and $K$ denotes an algebraically closed field of any characteristic. 
Mostly we will be concerned with the case $K=\C$,
but our methods to investigate these surfaces will use reduction modulo different primes,
and in fact we will also derive results exclusive to positive characteristic.
The factor of $5/6$ in front of $\lambda$ might seem unnatural at first,
but in fact it allows us to derive proper pencils in characteristics $2,3,5$
by substituting $s_1$ in the quintic polynomial and eliminating common factors.
It should be understood that we always work with such a proper model of the pencil in the sequel.

The above pencil (albeit without the extra factor) was  studied by Barth  in order to find a quintic with 15 three-divisible cusps (\cite{barth99}) and 
smooth quintics with 
many lines (\cite{barth2000}).
For the convenience of the reader we list below the facts from \cite{barth99}, \cite{barth2000} 
that we will use in the sequel.
All of them can be verified by straightforward computation (possibly with help of a computer program);
related properties also appear in \cite{xie2010} (see Remark \ref{rem-sing}).

Observe that if we denote by $B_{10}$ (resp.~$B_{15}$) the curve in $\PP_4$ given by $s_1 = s_5 = s_2 = 0$ (resp.~$s_1 = s_5 = s_3 = 0$), then  
the base locus of the pencil in question is the curve $B_{10} \cup B_{15}$. One can check 
by direct computation that 
the curve $B_{15}$ consists of the $15$ lines
\begin{equation}   \label{eq-B15line}
x_{i_1} = x_{i_2} + x_{i_3} = x_{i_4} + x_{i_5} = 0,  
\end{equation} 
where $i_1, i_2, i_3, i_4, i_5$ are pairwise different, i.e.~$\{i_1, i_2, i_3, i_4, i_5\}=\{0,1,2,3,4\}$. Similarly, the curve $B_{10}$ is the union of the 
five  conics $C_{i_1}$ (smooth outside characteristic $2$)
\begin{equation}   \label{eq-conic} 
x_{i_1} = x_{i_2}^2 + x_{i_3}^2 + x_{i_4}^2 +  x_{i_2}  x_{i_3} + x_{i_2} x_{i_4} + x_{i_3} x_{i_4}  = s_1 = 0.
\end{equation}
Therefore, the plane $x_{i_1} = s_1 = 0$ meets the base locus along the three lines \eqref{eq-B15line}
and the  conic \eqref{eq-conic}. In particular, the four curves 
are the only components  of intersection of the plane   $x_{i_1} = s_1 = 0$ with an irreducible quintic  
$S$ that belongs to the pencil.    
The general member  of the pencil $\{ S_\lambda \}$ is smooth.
Outside characteristics $3,13,17$
this can be checked with the Jacobi criterion aplied to the special member at $\lambda=0$.
Below we work out all singular values of $\lambda$ following \cite{barth2000},  \cite{xie2010}
independent of the characteristic.

\begin{Lemma}
\label{lem:smooth}
$S_\lambda$ is non-smooth exactly for $\lambda \in\left\{  -1, -\frac{3}{2}, -\frac{51}{50}, -\frac{13}{25}, -\frac{1}{2}, \infty\right\}$.
\end{Lemma}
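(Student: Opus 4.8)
The plan is to apply the Jacobian criterion to the complete intersection $S_\lambda = \{s_1 = 0\}\cap\{F_\lambda = 0\}\subset\PP_4$, where $F_\lambda = \tfrac56\lambda s_2 s_3 + s_5$. A point $[x]\in S_\lambda$ is singular exactly when $\nabla F_\lambda$ is proportional to $\nabla s_1 = (1,1,1,1,1)$ there. Since $\partial F_\lambda/\partial x_i = 5x_i^4 + \tfrac52\lambda s_2 x_i^2 + \tfrac53\lambda s_3 x_i$, the relation $\nabla F_\lambda = \mu\cdot\nabla s_1$ says precisely that all five coordinates are roots of the single quartic
\[
g(t) = 5t^4 + \tfrac52\lambda\, s_2(x)\, t^2 + \tfrac53\lambda\, s_3(x)\, t - \mu,
\]
where $s_2(x), s_3(x)$ and the constant $\mu$ are a priori unknown. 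The key point is that $g$ has degree $4$ (whenever $\mathrm{char}\,K\neq 5$), so a singular point takes at most four distinct coordinate values; together with $s_1(x) = 0$ — which rules out all five being equal — this forces the multiset $\{x_0,\dots,x_4\}$ into one of the partition types $(4,1), (3,2), (3,1,1), (2,2,1), (2,1,1,1)$.

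I would then treat these five types in turn. Up to the $\Sym_5$-action and rescaling, the relevant representatives are the rigid points $(1,1,1,1,-4)$ and $(2,2,2,-3,-3)$ for types $(4,1)$ and $(3,2)$, the one-parameter families $(1,1,1,b,c)$ with $b+c=-3$ and $(a,a,b,b,c)$ with $2a+2b+c=0$ for types $(3,1,1)$ and $(2,2,1)$, and, once a coordinate is allowed to vanish, the points $(0,0,0,1,-1)$, $(1,1,-1,-1,0)$, $(0,0,1,1,-2)$ and the family $(0,0,b,c,d)$ with $b+c+d=0$. For the rigid points the single equation $F_\lambda(x) = 0$ already determines $\lambda$: it gives $\lambda = -\tfrac{51}{50}$ from $(1,1,1,1,-4)$ and $\lambda = -\tfrac{13}{25}$ from $(2,2,2,-3,-3)$, and a one-line check that $g$ has $1,-4$ (resp.\ $2,-3$) among its roots for the appropriate $\mu$ confirms these points are genuinely singular. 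For the one-parameter families one imposes ``the $x_i$ are roots of $g$'' — equivalently, matches $s_2, s_3$, via Newton's identities, with the elementary symmetric functions of the four roots of $g$ — and feeds this into the two remaining coefficient equations of $g$; the free parameter cancels and forces $\lambda = -\tfrac32$ for type $(3,1,1)$, $\lambda\in\{-1,-\tfrac12\}$ for type $(2,2,1)$, and $\lambda = -1$ for type $(2,1,1,1)$, each time pinning the parameter to an explicit value, so that one again obtains honest singular points.

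The two values $\lambda = -1$ and $\lambda = \infty$ also admit a transparent direct description: using $s_1 = 0$ and Newton's identities one finds that $F_{-1}$ is proportional to $x_0 x_1 x_2 x_3 x_4$, so $S_{-1}$ is a union of five planes, while in the proper model $S_\infty$ degenerates to $\{s_1 = 0,\ s_2 s_3 = 0\}$; both are reducible, hence singular. Assembling the cases proves both implications at once: every $\lambda$ in the list supports an explicit singular point, and a singular point forces $\lambda$ into the list. The real obstacle is not any single computation but the bookkeeping: one must check that each ``solution'' genuinely has the claimed partition type (e.g.\ $b\neq c$ when $\lambda = -\tfrac32$) and that no division by a factor such as $1+\lambda$ or $1+2\lambda$ has quietly suppressed a value, and one must revisit the small characteristics, where the pencil is given by its proper model rather than by $F_\lambda$ and where some of the listed values collide with $0$ or $\infty$ in characteristics $2,3,5,13,17$ (this is exactly why smoothness of $S_0$ is only immediate outside characteristics $3,13,17$). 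Each such case is a finite explicit check, to be carried out last either by rerunning the quartic argument in the proper model or by directly writing down the degeneration.
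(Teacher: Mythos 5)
Your argument is correct, but it follows a genuinely different route from the paper's. The paper does not redo the complex classification at all: it quotes \cite[\S 3]{xie2010} for the statement over $\C$, notes that the six listed parameters stay singular in every characteristic, and then concentrates on showing that no \emph{new} singular parameters appear modulo any prime $p$, working in the Dwork-pencil coordinates $s_5-5tx_0\cdots x_4$ (where the partials become $x_i^4-t\prod_{j\neq i}x_j$), splitting according to $p\bmod 5$ and analysing fifth roots of unity; the case of equal non-zero partials is again delegated to \cite{xie2010}. You instead give a self-contained computation in the original coordinates: the key observation that at a singular point all five coordinates are roots of the single quartic $g$, hence take at most four values, together with the partition-type analysis via symmetric functions, is essentially Xie's computation reorganised, and I have checked that the outcomes you assert are right (type $(4,1)\Rightarrow\lambda=-\tfrac{51}{50}$, $(3,2)\Rightarrow-\tfrac{13}{25}$, $(3,1,1)\Rightarrow-\tfrac32$ with $b+c=-3$, $bc=4$, $(2,2,1)\Rightarrow\{-1,-\tfrac12\}$, $(2,1,1,1)\Rightarrow-1$, plus the transparent descriptions of $\lambda=-1$ and $\lambda=\infty$). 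A further payoff of your route is that all the identities are over $\Z$ with denominators only at $2,3,5$, so a single symbolic computation settles every characteristic $p\notin\{2,3,5\}$ at once, replacing the paper's case split by $p\bmod 5$. Two points you should make explicit when writing it up: first, in type $(2,1,1,1)$ the reason only the family $(0,0,b,c,d)$, $b+c+d=0$, occurs is that $g$ has no $t^3$-term, so its four (distinct) roots sum to zero, and comparison with $s_1=0$ forces the repeated coordinate to be $0$; second, the deferred checks in characteristics $2,3,5$ with the proper model are not optional, since the lemma is invoked in all characteristics (e.g.\ in Corollary \ref{cor:bad}), though they are indeed finite, routine verifications, comparable in size to what the paper itself leaves implicit.
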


\begin{proof}
Over $\C$ the lemma has been proved in \cite[\S3]{xie2010}.
Using this it  is easy to check that $S_\lambda$ is singular at the given values for $\lambda$
independent of the characteristic.
Thus it remains to prove that for neither $p$ there are other singular parameters $\lambda$
modulo $p$.
For this purpose, we follow the argument in \cite{xie2010} fairly closely and switch to the representation of our pencil
as hypersurface $\{s_1=0\}$ in the Dwork pencil:
\[
F_t\; = \; s_5-5tx_0\cdots x_4.
\]
Intersected with $\{s_1=0\}$, this reduces to $\frac 56 s_2\cdot s_3=0$ at $t=1$,
so the parameters $\lambda, t$ are related by
\[
\lambda = \frac t{1-t},\;\;\; t = \frac\lambda{\lambda+1}.
\]
The corresponding singular values for $t$  are $\left\{-1,1,3,51,-\frac{13}{12},\infty\right\}$.
By the Jacobi criterion $S_\lambda$ is singular if  all partial derivatives of $F_t$ are equal.
Independent of the characteristic, it suffices to consider the partials of $s_1$ and
\[
x_i^4 - t \prod_{j\neq i} x_j = \partial_{x_i}\left(\frac{s_5-s_1^5-5tx_0\cdots x_4}5\right)\mid_{s_1=0} \;\;\; (i=0,\hdots,4).
\]
In  case of vanishing partial derivatives, one easily rules out $x_i=0$ for all $i=0,\hdots,4$.
In the affine chart $x_0=1$ the partials then return 
\[
1 = t x_1\cdots x_4,\;\;\; x_i^4 = t \prod_{j\neq i} x_j \;\;\; (i=1,\hdots,4)
\]
which readily implies $x_i^5=1$ for each index $i$. 
Over $\C$, the hypersurface condition $s_1=0$ then requires
 that the $x_i$ run through the full set of fifth roots of unity,
so that in particular $t=1$, one of the singular parameters.
In characteristic $p\equiv 2,3\mod 5$
the same argument goes through without modification.
For $p\equiv 4\mod 5$, one uses the fact that for a primitive fifth root of unity $\zeta\in\F_{p^2}$
we have $\zeta+\zeta^{-1}\in\F_p$.
Hence the condition $s_1=0$ breaks down into 4 cases
which can be shown to give no solution for any $p$ except for the above ones.
Finally for $p\equiv 1\mod 5$,
a case-by-case analysis using the norm of $\Q(\zeta)/\Q$ 
reveals three possibilities of additional singularities:
they occur at
$t=3, 9$ for $p=11$ and  at $t={10}$ for $p=41$.
By inspection, each value arises from some singular parameter over $\Q$ via reduction,
so there are no additional singular parameters modulo $p$.
This settles the case of all partial derivatives vanishing.

The case of equal non-zero partial derivatives
can be treated completely analogously to \cite{xie2010}.
For shortness we omit the details,
but it is easily checked that there are no further singular parameters.
%
%
\end{proof}



For the special quintic with 75 lines we introduce the following notation:   
\begin{eqnarray}
\label{eq:a}
a &:=&  -\frac{2}{b+2}, \quad \mbox{ where } \; b^4 - b^3 + 1 = 0.
\end{eqnarray}
Throughout the paper  $S_a$ stands for the surface given by \eqref{eq-pencil} with $\lambda = a$
(over $\C$ unless specified otherwise).
In the sequel we shall often write $\Slambda$ instead of $S_{\lambda}$ when
there is no ambiguity from the context.
%
%
By Lemma \ref{lem:smooth}, the surface $S_{a}$  is smooth  over $\C$
(for positive characteristic see Corollary \ref{cor:bad}).
One directly verifies that $S_a$ contains the line 
\begin{equation} \label{eq-extraline}
\mbox{span}(\{(1:-1:b:-b:0),((b-1):1:-(b-1):0:-1)\}),
\end{equation} 
with $b$ given by \eqref{eq:a}. 
In fact, the $\Sym_5$-action endows $S_{a}$  
with $60$ lines
obtained from \eqref{eq-extraline}
by virtue of the symmetries.
With the 75 complex lines at hand, 
we already have a good portion of divisors on $S_a$.
Our main result for this paper is:

\begin{Theorem}
\label{thm}
Over $\C$, the quintic $S_a$ has Picard number 41.
\end{Theorem}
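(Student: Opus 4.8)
The plan is to squeeze the Picard number $\rho(S_a)$ between 41 and 41 by combining a lower bound from explicit divisors with an upper bound from cohomology of well-chosen quotients. For the lower bound, I would take the 75 lines together with the plane sections and the $\Sym_5$-orbit structure described above, form the sublattice of $\NS(S_a)$ they generate, and compute its rank by linear algebra over $\Q$; the expectation (signalled by the paper's section headings on Godeaux and K3 quotients) is that this already produces rank $41$, so that $\rho(S_a)\geq 41$. One must be slightly careful that the intersection matrix of this configuration of lines and conics is computed correctly — self-intersections of lines on a quintic are $-3$ by adjunction, and incidences among the 75 lines and the five plane conics from \eqref{eq-conic} must be read off from the combinatorics of \eqref{eq-B15line} and \eqref{eq-extraline} under the $\Sym_5$-action.

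For the upper bound, the natural route — and the one the introduction advertises via Sections \ref{s:Godeaux} and \ref{s:K3} — is to pass to quotients by subgroups of $\Sym_5$. Quotienting $S_a$ by a suitable group should yield (a resolution of) a Godeaux surface and, for another subgroup, (a resolution of) a K3 surface. Since $\rho$ of a K3 is at most $20$ and is constrained by the transcendental lattice, and since the pull-back map on Néron--Severi groups relates $\rho(S_a)$ to the equivariant pieces $\rho$ of the quotients plus contributions from the exceptional and ramification divisors, one gets an inequality $\rho(S_a)\leq 41$. Concretely I would: decompose $H^2(S_a)$ (or rather its algebraic part) into $\Sym_5$-isotypic components; identify each component with (a summand of) the cohomology of one of the quotient surfaces twisted appropriately; bound the algebraic rank of each component using the Lefschetz $(1,1)$-theorem on the quotient — for the K3 quotient via its explicit transcendental lattice or via a point-counting/Tate-type argument (Proposition \ref{prop:tate}), and for the Godeaux part by a direct dimension count since $p_g=0$ there forces the relevant $H^2$ to be algebraic.

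The main obstacle I expect is controlling the K3 quotient precisely enough: one needs to know its Picard number exactly (not merely $\leq 20$), because any slack there propagates to the bound on $\rho(S_a)$. This is presumably handled by exhibiting enough divisors on the K3 to force $\rho = 20$ (supersingular-type count over a finite field, then lift), or by identifying the K3 explicitly — e.g.\ as a Kummer or a quotient with a known transcendental lattice determined by the quartic $b^4-b^3+1=0$ defining $a$. A secondary technical point is bookkeeping the singular fibres: since $S_a$ meets the base locus $B_{10}\cup B_{15}$ and the quotient maps are ramified along $\Sym_5$-fixed loci, one must carefully account for all exceptional curves introduced upon resolving the quotient singularities and check they are already contained in the span of the 75 lines and plane sections, so that the lower and upper bounds match at $41$ with no gap.

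Finally, to nail equality I would verify that the $41$-dimensional lattice found in the first step is in fact \emph{all} of $\NS(S_a)\otimes\Q$ by checking its orthogonal complement in $H^2(S_a,\C)$ is purely transcendental — i.e.\ that the period computation (or the matching of Hodge numbers between $S_a$ and the quotients) leaves exactly $h^{2,0}(S_a)\cdot 2 + (\text{remaining Hodge }(1,1))$ accounted for, with the transcendental part of dimension $b_2(S_a) - 41$. Here $b_2(S_a)$ for a smooth quintic is $53$, so one is asserting a transcendental lattice of rank $12$, which should match the rank coming from the non-algebraic isotypic component of one of the quotients; confirming this numerical coincidence is the cleanest way to close the argument.
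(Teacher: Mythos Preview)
Your plan has the two quotients playing the wrong roles, and in consequence both bounds as you describe them would fail.

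For the lower bound: the $75$ lines, the five plane conics $C_i$, and the hyperplane section do \emph{not} span a rank-$41$ sublattice. The Gram matrix of the $75$ lines has rank exactly $40$, and both $\mathcal O_{S_a}(3)$ and the classes $3C_i$ already lie in their $\Q$-span, so adding plane sections and conics buys nothing. The missing $41$st class is produced by the Godeaux quotient $Q_a=S_a/\Csym_5$: since $p_g(Q_a)=0$ one has $\rho(Q_a)=b_2(Q_a)=9$, but the images of the $75$ lines on $Q_a$ span only a rank-$8$ sublattice. Hence there is an $R^*$-invariant class on $S_a$ outside the span of the lines, and this is what pushes the lower bound from $40$ to $41$. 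So the Godeaux surface is used for the \emph{lower} bound, not the upper.

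For the upper bound: you write that one should ``force $\rho=20$'' on the K3 quotient $X_a$, but this is exactly backwards. The $\Sym_5$-representation on $H^{2,0}(S_a)$ forces a splitting $T(S_a)\cong V^4$ with $V=T(X_a)$, so $\rho(S_a)=53-4\cdot\mathrm{rank}\,T(X_a)$. If $\rho(X_a)=20$ then $\mathrm{rank}\,T(X_a)=2$ and $\rho(S_a)=45$, not $41$. What is needed is $\rho(X_a)=19$, i.e.\ one must \emph{rule out} $\rho(X_a)=20$. The paper does this by van Luijk's method: assuming $\rho(X_a)=20$, reduction modulo the good primes $19$ and $23$ would force $\NS(X_a\otimes\bar\F_{19})$ and $\NS(X_a\otimes\bar\F_{23})$ to have discriminants in the same square class (both receiving $\NS(X_a)$ with finite index), and the Artin--Tate formula together with point counts shows no such match exists. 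Exhibiting more divisors on $X_a$ would therefore work against you, not for you.
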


\begin{Remark}
\label{rem:5}
To the best of our knowledge,
the Picard number 41 of $S_a$ gives a new record among smooth complex quintics.
In comparison, Picard numbers 43 and 45 have so far only been realised by desingularisations of quintics
with rational double point singularities in \cite{S-quintic}.
The previous record of 37 was attained by the Fermat quintic,
so Theorem \ref{thm} also gives an alternative way to see
that $S_a$ and the Fermat quintic surface cannot be isomorphic over $\C$.
In fact the surfaces differ also in another respect:
the Fermat quintic has $\NS$ generated by lines (even over $\Z$ by \cite{SSvL})
while any basis of $\NS(S_a)$ includes some other divisor class
that will be made visible in the proof of Lemma \ref{lem-lowerbound}.
\end{Remark}

The proof of Theorem \ref{thm} proceeds in two steps.
First we derive the lower bound $\rho(S_a)\geq 41$ 
by exhibiting a suitable quotient surface $Q$ of $S$ by a cyclic group of order 5
(a Godeaux surface studied in section \ref{s:Godeaux}).
Then we establish the upper bound $\rho(S_a)\leq 41$
through a quotient surface $X$ that is K3 in section \ref{s:K3}.
Here we use reduction modulo different primes 
and the Artin-Tate conjecture in a technique following van Luijk \cite{vL}
and Kloosterman \cite{RK}.

\begin{Remark} \label{rem-sing}
For $K=\C$ the pencil  $\{S_{\lambda}\}$ has also been studied in  \cite{xie2010}.
By \cite[Thm~1.2]{xie2010}
the pencil in question
 contains (up to Galois conjugation) 
exactly three smooth surfaces that carry a line other than those from $B_{15}$
 given by \eqref{eq-extraline}. 
Moreover, no quintic in the pencil \eqref{eq-pencil} contains more than $75$ lines
and the surface $S_{a}$ is  the unique (up to Galois conjugation) element  of the pencil which carries the maximal number of lines.
We will not use that result in the sequel, but it certainly motivates our interest in the quintic  $S_{a}$.
%
\end{Remark}

\section{Lower bound -- Godeaux quotient}
\label{s:Godeaux}



In this section we derive the lower bound $\rho(S_a)\geq 41$ (Lemma \ref{lem-lowerbound}).
At first   we exhibit a Godeaux surface $Q$
that arises from $S$ as a quotient by a cyclic group of order 5
acting without fixed points. 
Then a close examination of the 75 lines on $S_a$ and their images under the quotient map
implies the inequality in question.

Consider the automorphism $R : \PP_4 \rightarrow \PP_4$ defined as
$$
R(x_0 : x_1: x_2: x_3: x_4) := (x_4: x_0 : x_1: x_2: x_3).
$$
Outside characteristic $5$,
$R$ has five fixed points: 
$(1 : \varepsilon_5^k :  \varepsilon_5^{2k} :   \varepsilon_5^{3k} :   \varepsilon_5^{4k})$
where $k \in \{ 0, 1, 2, 3, 4\}$ and 
$\varepsilon_5 \neq 1$ is a root of unity of order five.  
Clearly each member of the pencil $\{S_\lambda\}$ is invariant under $R$,
so we can restrict $R$ to $S_\lambda$ and 
compute the fixed points. 
One directly sees that 
$s_l(1, \ldots, \varepsilon_5^{4k}) = 0$ for all $k\neq 0$ and $5\nmid l$, 
whereas $s_{5l}(1, \ldots, \varepsilon_5^{4k})=s_l(1,\hdots,1)=5$ 
for each $k$ and $l$.
In conclusion 
none of the fixed points of $R$ belong to $S_\lambda$
for any $\lambda \in K$. 
In characteristic $5$, there is only one fixed point $(1:1:1:1:1)$
which is easily verified to lie outside any quintic $S_\lambda$:
upon subsituting $s_1$ for $x_4$,
the relevant polynomial modulo $5$ is
\[
\frac{x_0^5+\hdots+x_3^5-(x_0+\hdots+x_3)^5}5
\]
which evaluates as $-4\cdot 51\not\equiv 0\mod 5$.

The automorphism $R$ generates a subgroup $\Csym_5 \subset \Sym_5\subset\Aut(S)$.
Assume that the quintic $\Slambda$ 
is smooth (or replace it by the minimal desingularisation if it is non-degenerate,
i.e.~it has only rational double points as singularities), 
then
\begin{equation}
\mbox{the quotient surface } Q
:= \Slambda/_{\Csym_5} \mbox{ is smooth.} 
\end{equation} 
We thus obtain a Godeaux surface.
If $\mbox{char}(K) \neq 5$, we can almost verbatim repeat the considerations of
\cite[Example~9.6.2]{badescu} to show that $Q$ 
is a minimal surface of general type with $\mbox{Pic}^\tau(Q)\cong \Z/5\Z$ and the following invariants:
\begin{equation} \label{eq-irregularity} 
h^1({\mathcal O}_Q) = h^2({\mathcal O}_Q) = 0,  \mbox{ and } {\mbox{K}^2_{Q}} = 1.
\end{equation}
In characteristic $5$, however, the invariants differ as $Q$ is a non-classical Godeaux surface
with $\mbox{Pic}^\tau(Q)\cong \mu_5$. Namely, because $\Csym_5\cong\Z/5\Z$, one finds as in \cite{Miranda}
\[
h^1({\mathcal O}_Q) = h^2({\mathcal O}_Q) =  {\mbox{K}^2_{Q}} = 1.
\]
Remember that $S_0$ has a smooth model in characteristic $5$.
As opposed to the implicit result of \cite{Miranda},
this yields an explicit non-classical Godeaux surface in characteristic $5$:

\begin{Proposition}
\label{prop:5}
In characteristic five, $Q_0$ is a non-classical Godeaux surface.
\end{Proposition}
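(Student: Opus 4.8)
The plan is to exhibit $Q_0$ as an honest smooth surface and then transfer the numerical invariants along the quotient map. First I would record that $S_0$ is smooth over the (algebraically closed) base field $K$ of characteristic $5$: this is Lemma~\ref{lem:smooth} applied to the proper model of the pencil in characteristic $5$, since $\lambda=0$ does not reduce to any of the six singular parameters. As the generator $R$ of $\Csym_5\cong\Z/5\Z$ has in characteristic $5$ the unique fixed point $(1:1:1:1:1)$, which we already checked lies off every member of the pencil, $\Csym_5$ acts freely on $S_0$; hence $\pi\colon S_0\to Q_0$ is \'etale of degree $5$ and $Q_0$ is a smooth connected projective surface.

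Next I would descend the invariants. As $\pi$ is \'etale, $K_{S_0}=\pi^*K_{Q_0}$; since $K_{S_0}=\OO_{S_0}(1)$ is ample and $\pi$ is finite and surjective, $K_{Q_0}$ is ample, so $Q_0$ is a minimal surface of general type. Multiplicativity of the self-intersection and of the topological Euler number along the degree-$5$ \'etale cover gives $K_{Q_0}^2=\tfrac15 K_{S_0}^2=1$ and $e(Q_0)=\tfrac15 e(S_0)$, so Noether's formula yields $12\chi(\OO_{Q_0})=K_{Q_0}^2+e(Q_0)=\tfrac15(K_{S_0}^2+e(S_0))=\tfrac15\cdot12\chi(\OO_{S_0})=12$, i.e.~$\chi(\OO_{Q_0})=1$ (using $\chi(\OO_{S_0})=5$ for a smooth quintic). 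Hence $Q_0$ is a numerical Godeaux surface.

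It remains to see that $Q_0$ is \emph{non}-classical, i.e.~that $h^1(\OO_{Q_0})\neq0$; this is where characteristic $5$ does something genuinely new, since the above bookkeeping produced the same $\chi(\OO_{Q_0})$ as in other characteristics. I would use that $\pi$ is a nontrivial connected \'etale $\Z/5\Z$-torsor, hence a nonzero class in $H^1_{\mathrm{et}}(Q_0,\Z/5\Z)$. Feeding it into the Artin--Schreier sequence $0\to\Z/5\Z\to\OO_{Q_0}\xrightarrow{F-1}\OO_{Q_0}\to0$ and using that $F-1$ is surjective on $H^0(\OO_{Q_0})=K$, one obtains an injection $H^1_{\mathrm{et}}(Q_0,\Z/5\Z)\hookrightarrow H^1(\OO_{Q_0})$, so $h^1(\OO_{Q_0})\geq1>0=h^1(\OO_{S_0})$ and $Q_0$ is not classical; together with the previous paragraph this already proves the Proposition. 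For the sharper picture quoted before the statement — $h^1(\OO_{Q_0})=h^2(\OO_{Q_0})=K_{Q_0}^2=1$ and $\mathrm{Pic}^\tau(Q_0)\cong\mu_5$ — I would note that $b_1(Q_0)=0$ (an Albanese map of $Q_0$ would pull back to a nonconstant morphism from the quintic $S_0$, whose Albanese is trivial), so $\mathrm{Pic}^0(Q_0)$ is infinitesimal, and, granting $h^1(\OO_{Q_0})=1$, the torsor forces Frobenius to act bijectively on the line $H^1(\OO_{Q_0})$, hence $\mathrm{Pic}^\tau(Q_0)\cong\mu_5$ rather than $\Z/5\Z$ or $\alpha_5$. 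The main obstacle is precisely this last normalisation — the sharp bound $h^1(\OO_{Q_0})\leq1$ together with the identification of the torsion group scheme — which is why I would invoke Miranda's analysis \cite{Miranda} of Godeaux surfaces in characteristic five rather than redo it; everything else is formal descent along the \'etale cover.
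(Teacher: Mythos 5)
Your proposal is correct and takes essentially the same route as the paper: $S_0$ is smooth in characteristic $5$, the order-$5$ automorphism acts freely so that $\pi\colon S_0\to Q_0$ is a degree-$5$ \'etale quotient, the numerical invariants $K_{Q_0}^2=1$ and $\chi(\OO_{Q_0})=1$ descend, and the characteristic-$5$ \'etale $\Z/5\Z$-cover is what makes $Q_0$ non-classical. The only difference is cosmetic: where the paper simply quotes Miranda for $h^1(\OO_{Q_0})=h^2(\OO_{Q_0})=1$ and $\mathrm{Pic}^\tau(Q_0)\cong\mu_5$, you make the essential point $h^1(\OO_{Q_0})\neq 0$ self-contained via the Artin--Schreier sequence and defer only the sharper identifications to Miranda, which is a legitimate (and slightly more explicit) presentation of the same argument.
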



We shall now turn to the Picard group of the special quintic $S_a$.
Our previous considerations put us in the position to derive a geometric lower bound for the Picard number.
We state the result here only over $\C$.
The argument goes through without essential modifications in positive characteristic as well,
but there we will derive better bounds in conjunction with the Tate conjecture (see Remark \ref{rem} and Corollary \ref{cor:tate}).

\begin{Lemma} \label{lem-lowerbound}
Over $\C$ the following inequality holds  
\begin{equation*} 
\rho(S_{a}) \geq 41.
\end{equation*}
\end{Lemma}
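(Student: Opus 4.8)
The plan is to leverage the Godeaux quotient $q\colon S_a \to Q$ constructed above, whose fixed-point freeness guarantees that $q$ is étale of degree $5$, together with the fact that $Q$ is a minimal Godeaux surface with the invariants in \eqref{eq-irregularity}. First I would record the numerical consequences for $Q$: since $h^1(\mathcal O_Q)=h^2(\mathcal O_Q)=0$ and $\mathrm K^2_Q=1$, Noether's formula gives $e(Q)=11$, hence $b_2(Q)=9$, and since $q_Q=0$ the surface $Q$ is of Hodge type $(0,9,0)$, so that $\rho(Q)=b_2(Q)=9$; in particular $\NS(Q)\otimes\Q$ is the whole of $H^2(Q,\Q)$. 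Then I would pull back: $q^*\colon \NS(Q)\otimes\Q \hookrightarrow \NS(S_a)\otimes\Q$ is injective, and its image lands in the $\Csym_5$-invariant part $\big(\NS(S_a)\otimes\Q\big)^{\Csym_5}$; in fact, since for an étale quotient rational cohomology of $Q$ is exactly the invariant part of that of $S_a$, one gets $\rho(S_a)^{\Csym_5} = \rho(Q) = 9$, where the superscript denotes the rank of the invariant sublattice.

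The main input is therefore to produce, inside $\NS(S_a)\otimes\Q$, a $\Csym_5$-subrepresentation complementary to the invariants that has dimension at least $32$, so that $\rho(S_a)\ge 9+32=41$. The natural source is the configuration of $75$ lines on $S_a$ together with the hyperplane (or quintic) class and the conics $C_{i_1}$ from \eqref{eq-conic}. Concretely, I would let $L\subset \NS(S_a)\otimes\Q$ be the $\Q$-span of the $75$ lines, the hyperplane class $h$, and the five conics, decompose $L$ as a $\Csym_5$-module (equivalently, as a module over the $\Sym_5$-action, which is cleaner since the $60$ non-base lines \eqref{eq-extraline} form a single $\Sym_5$-orbit of size $60$, the $15$ base lines \eqref{eq-B15line} another orbit, and the conics a third), and compute the multiplicities of the nontrivial characters of $\Csym_5 = \Z/5$. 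Adding up the dimensions of all nontrivial isotypic pieces of $L$ that are actually realised gives a lower bound for $\dim\big(\NS(S_a)\otimes\Q\big)/\text{(invariants)}$; one wants this to reach $32$. Since $\rho(Q)=9$ already accounts for the invariant part, the two contributions do not overlap, so one concludes $\rho(S_a)\ge 41$.

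In practice the cleanest route is probably to avoid separating the representation and instead bound the rank of $L$ directly: compute the Gram matrix of the $75$ lines, $h$, and the conics under the intersection pairing on $S_a$ — all entries are determined by the explicit equations \eqref{eq-B15line}, \eqref{eq-conic}, \eqref{eq-extraline} and the known intersection numbers of lines and conics on a smooth quintic ($\ell^2=-3$, $C^2=-1$ by adjunction, $\ell\cdot h=1$, $C\cdot h=2$, and the incidences among lines/conics read off from the coordinates) — and show this matrix has rank $41$. This is a finite linear-algebra computation (done over $\Q$, or modulo a convenient prime to certify the rank), and since $\mathrm{rank}(L)=41$ forces $\rho(S_a)\ge 41$, it finishes the proof; the appearance of the conics (or some combination thereof) in every basis is exactly the ``other divisor class'' promised in Remark \ref{rem:5}. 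The main obstacle is the bookkeeping: correctly enumerating the incidence graph of the $75$ lines and $5$ conics under the $\Sym_5$-action and verifying that the resulting Gram matrix indeed has rank $41$ rather than something smaller; the representation-theoretic viewpoint is the safeguard that tells us what rank to expect and guarantees compatibility with the Godeaux computation $\rho(Q)=9$.
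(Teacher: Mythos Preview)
Your representation-theoretic outline is essentially the paper's argument, but your ``cleanest route'' has a genuine gap: the Gram matrix of the $75$ lines together with $h$ and the five conics $C_i$ does \emph{not} have rank $41$. The paper records explicitly that $\mathcal{O}_{S_a}(3)$ and each $3C_i$ already lie in the $\Z$-span of the $75$ lines (this follows from the description of the planes $x_{i_1}=s_1=0$ in Section~\ref{sect-pencil}), so adjoining $h$ and the conics adds nothing and the rank stays at $40$. In particular, your identification of the ``other divisor class'' in Remark~\ref{rem:5} with a conic is wrong; that extra class is genuinely invisible among the obvious curves and is only produced later, in Section~\ref{s:NS}, via a supersingular reduction at $p=2$.

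What actually pushes the bound from $40$ to $41$ is the computation you skipped over in your first approach. With $L$ the $\Q$-span of the $75$ lines (rank $40$) and $r=\dim L^{\Csym_5}$, one has
\[
\rho(S_a)\ \ge\ \dim\bigl(L+\NS(S_a)^{\Csym_5}\otimes\Q\bigr)\ =\ 40+9-r,
\]
so the inequality $\rho(S_a)\ge 41$ is equivalent to $r\le 8$. The paper establishes this by computing, on the Godeaux quotient, the Gram matrix $N$ of the $15$ images $\pi(L_j)$ of the line orbits and finding $\mathrm{rank}(N)=8$. Thus there is an $R^*$-invariant divisor class $D\in\NS(S_a)$ outside the $\Q$-span of the lines, and $\rho(S_a)\ge 41$. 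Your proposal contains all the ingredients for this argument but does not isolate the crucial rank-$8$ computation, and the alternative you label ``cleanest'' would in fact fail.
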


\begin{proof}
The surface $S_{a}$ 
carries the $15$ lines \eqref{eq-B15line}. Moreover, it contains the 
$60$ lines obtained by the action of symmetries
on the line \eqref{eq-extraline}. Let $M$  
be the Gram matrix of the 
$75$ lines in question. By direct computation we obtain
$$
\mbox{rank}(M) = 40 \;\;\; \text{ and hence } \;\;\; \rho(S_a)\geq 40.
$$
Observe that both $\mathcal{O}_{S_{a}}(3)$ and the divisors $3 C_i$, where $i= 0, \ldots, 4$,
lie in the span of the $75$ lines (see section~\ref{sect-pencil}),
so there are no other obvious independent curves on $S_a$.

In order to prove that actually $\rho(S_a)\geq 41$,
we consider the Godeaux surface $Q_a$ and the quotient map
\[
{\mathbb \pi} \, : \, S_{a} \rightarrow Q_{a}.
\]
By  \eqref{eq-irregularity}  and Noether's formula, we compute the 
topological Euler number (or Euler-Poincar\'e characteristic)  $\mbox{e}(Q_{a}) = 11$.  
Since we work over $\C$,  equality \eqref{eq-irregularity} 
implies in terms of the Hodge decomposition
\[
b_2(Q_a) = h^{1,1}(Q_a) = 9.
\]
Then Lefschetz' theorem on $(1,1)$-classes yields
 \begin{equation*} \label{eq-picardquotient}
 \rho(Q_{a}) =  b_2(Q_a) = 9 .
 \end{equation*}
 Pulling back divisors to $S_a$ via $\pi^*$, we find that
 \begin{eqnarray}\label{eq:inv}
 \mbox{rank}\left(\NS(S_a)^{R^*=1}\right) = 9.
 \end{eqnarray}
We shall now compare with the contribution from the lines which come in 15 $R$-orbits.
Let $L, L'$ be two of the $75$ lines on $S_{a}$. 
Then intersection numbers on $Q_a$ are given by
$$
{\mathbb \pi}(L).{\mathbb \pi}(L') = \frac{1}{5} \left(\sum_{i=0}^{4} R^i(L)\right).\left(\sum_{i=0}^{4} R^i(L')\right)
=
L.\left(\sum_{i=0}^{4} R^i(L')\right)
$$
Thus we can  compute the Gram matrix $N$
of the 15 divisors on $Q_a$ that are the 
images of the $75$ lines on $S_a$ under the quotient map $\pi$.  
A direct computation gives
\begin{equation} \label{eq-rankquotient}
\mbox{rank}(N) =8,\;\;\; \mbox{disc}(N) = -2.
\end{equation}
Comparing with \eqref{eq:inv}
we deduce that
there is an $R^*$-invariant divisor class in $D\in\NS(S_a)$ 
which is not contained in the $\Q$-span of the 75 lines.
This implies that $\rho(S_a) \geq 41$ as claimed.
\end{proof}

\begin{Remark}
\label{rem}
a)
In positive characteristics where $S_a$ is smooth,
exactly the same argument goes through 
after lifting $Q_a$  to $\C$ which implies the analogous (in)equalities
(or use reduction modulo $\p$).
Those characteristics where $S_a$ attains singularities require some extra care.\\
b)
K3 quotients and the Tate conjecture will allow us to derive better, and in fact precise estimates for the
Picard numbers  $\rho(S_a\otimes\bar\F_p)$ regardless of the (rational double point) singularities (Corollary \ref{cor:tate}, Remark \ref{rem:tate}).
\end{Remark}

\begin{Remark}
Alternatively, one could argue with the induced action of $R$ on the holomorphic 2-form over $\C$.
As we will infer in \eqref{eq:V^4}, this implies that the transcendental lattice $T(S)$ generally has 4-divisible rank.
Consequently $\rho(S)\equiv 1\mod 4$, so that for $S_a$ our lower bound $\rho\geq 40$ coming from the lines on $S_a$ automatically improves to the bound of Lemma \ref{lem-lowerbound}.
In our eyes, the given proof has two advantages:
relative independence of the characteristic (as sketched in Remark \ref{rem}.a))
and  a constructive nature which we will exploit in some detail in Section \ref{s:NS}
in order to compute $\NS(S_a)$ up to index $2^i, i\leq 4$.
\end{Remark}

\section{Upper bound -- quotient K3 surface}
\label{s:K3}

\subsection{}
\label{ss:quot}

In order to complete the proof of Theorem \ref{thm}
it remains to establish the upper bound $\rho(S_a)\leq 41$ over $\C$.
Here we shall crucially use the $\Sym_5$ action on the complex surface $S=S_\lambda$.
Consider the transcendental part $T(S)$ of $H^2(S)$
obtained as the orthogonal complement of $\NS(S)$ with respect to the intersection pairing
given by cup product.
This can be understood as a lattice, as a Hodge structure or as a Galois representation.
The Hodge structure is directly related to the regular 2-forms on $S$,
hence we study the action of $\Sym_5$ on $H^{2,0}(S)$.
Using the isomorphism (over any field $k$)
\[
H^{2,0}(S) \cong H^0(K_S) = H^0(\mathcal O_S(1)) \cong k[x_0,\hdots,x_3]^{(1)} \cong k[x_0,\hdots,x_4]^{(1)}/k s_1,
\]
one easily finds the irreducible representation of $\Sym_5$ on $H^{2,0}(S)$ given by
\begin{eqnarray}
\label{eq:sigma}
\Sym_5\ni \sigma: k[x_0,\hdots,x_4]^{(1)}/k s_1 & \to & k[x_0,\hdots,x_4]^{(1)}/k s_1\\
f \;\;\;\;\;\;\;\;\;\;\;
& \mapsto & 
\;\;\;\;\;\;\mbox{sgn}(\sigma) \, \sigma^*f.
\end{eqnarray}
Since the action of $\Sym_5$ is defined over $\Q$,
we infer the splitting 
\begin{eqnarray}
\label{eq:V^4}
T(S)=V^4
\end{eqnarray}
for some irreducible Hodge structure $V$.
Here $V$ can be found as $+1$-eigenspace in $T(S)$ for any element $\sigma\in\Sym_5$
such that $\dim H^{2,0}(S)^{\sigma^*=1}=1$.
For instance, using any involution of sign $-1$ in $\Sym_5$,
$V$ will appear on a K3 quotient $X$ of $S$
that we exhibit below.

\subsection{}
Recall the special member $S_a$.
Since a quintic $S$ has $b_2(S)=53$, we know by Lemma \ref{lem-lowerbound} that $T(S_a)$ has rank at most 12.
On the other hand, 
\[
\mbox{rank} (T(S_a))\geq 2p_g(S_a) =  8
\]
by Lefschetz' theorem.
The splitting $T(S_a)=V_a^4$ implies that $T(S_a)$ has rank 4-divisible.
Hence 
there are only two possibilities remaining: 
\begin{eqnarray}
\label{eq:rkT}
\mbox{rank}(T(S_a)) = 8 \text{ or } 12.
\end{eqnarray}

Our goal is to prove that the latter alternative holds:

\begin{Lemma}
\label{lem:T}
On $S_a$ over $\C$, the Hodge structure $T(S_a)$ has rank 12.
\end{Lemma}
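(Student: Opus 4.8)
The goal is to rule out the possibility $\mathrm{rank}(T(S_a))=8$ from \eqref{eq:rkT}, equivalently to show $\rho(S_a)=41$ and not $45$. The strategy is to produce a quotient K3 surface $X=X_a$ carrying a copy of the irreducible Hodge piece $V_a$ from \eqref{eq:V^4}, and then to bound $\rho(X_a)$ from above by reduction modulo two well-chosen primes $p$ of good reduction, combined with the Artin--Tate conjecture (which is known for K3 surfaces over finite fields in the relevant characteristics). First I would fix an involution $\sigma\in\Sym_5$ of sign $-1$, say a transposition; by \eqref{eq:sigma} it acts on $H^{2,0}(S)$ with some $+1$-eigenspace of dimension $1$, so the quotient $S_a/\langle\sigma\rangle$, suitably resolved, is a K3 surface $X_a$ with $p_g=1$, and $V_a$ embeds in $T(X_a)$. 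Hence $\mathrm{rank}\,T(S_a)=8$ would force $\mathrm{rank}\,T(X_a)\ge 8$, i.e.\ $\rho(X_a)\le 14$, whereas $\mathrm{rank}\,T(S_a)=12$ is compatible with $\rho(X_a)\ge 10$ but not more than that from $V_a$ alone. The plan is therefore to \emph{disprove} $\rho(X_a)\le 14$ by exhibiting a prime $p$ with $\rho(X_a\otimes\bar\F_p)$ provably equal to $\rho(X_a\otimes\C)$ via the Artin--Tate formula, giving $\rho(X_a\otimes\C)\ge 16$, hence $\mathrm{rank}\,T(X_a)\le 6$ and so $\mathrm{rank}\,T(S_a)=12$.

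The key steps, in order, are as follows. (i) Write down $X_a$ explicitly as (the minimal resolution of) the quotient of $S_a$ by the chosen sign-$-1$ involution, identify its singularities, and confirm it is K3 by computing $K_{X_a}$ and $h^1(\mathcal O_{X_a})$; record the sublattice of $\NS(X_a)$ generated by the images of the $75$ lines and the exceptional curves, together with $\mathcal O_{X_a}(1)$-type classes, and compute its rank $r_0$ and discriminant. (ii) Choose a prime $p$ of good reduction for $S_a$ (hence for $X_a$), using Lemma~\ref{lem:smooth} and Corollary~\ref{cor:bad}, and count points on $X_a\otimes\F_{p}$ and $X_a\otimes\F_{p^2}$ to get the characteristic polynomial of Frobenius on $H^2_{\text{ét}}$; the number of eigenvalues that are $p$ times a root of unity gives an upper bound for $\rho(X_a\otimes\bar\F_p)$, and the Tate conjecture (known here) makes it an equality. (iii) Pick $p$ so that this count yields $\rho(X_a\otimes\bar\F_p)=r_0$ or such that, over two primes $p_1,p_2$, the Artin--Tate discriminants differ by a non-square — the van Luijk/Kloosterman mechanism — pinning $\rho(X_a\otimes\C)$ to exactly $r_0$. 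If $r_0\ge 16$ we are done; if the lines only give $r_0=14$ on $X_a$, then one instead shows directly that $\rho(X_a\otimes\bar\F_p)=14$ for a suitable $p$ and that the transcendental discriminant is incompatible with $\rho(S_a)=45$, i.e.\ one transfers the index-$2$-power information between $S_a$ and $X_a$ through the quotient map.

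The main obstacle I anticipate is \textbf{step (iii)}: making the point counts actually close the gap. Over $\C$ we only know $\rho(X_a)\in\{10,14\}$ a priori from the $V_a$-splitting and the visible lattice, and a single prime gives only an upper bound for $\rho(X_a\otimes\bar\F_p)$ which, being congruent to $\rho(X_a\otimes\C)$ modulo the relevant parity/divisibility constraints, may land at $14$ even when the complex Picard number is $10$ — so one genuinely needs either a prime where Frobenius forces $\rho(X_a\otimes\bar\F_p)=14$ \emph{and} a second, independent obstruction (a non-square Artin--Tate discriminant quotient between two such primes) to conclude $\rho(X_a\otimes\C)=14$, or a clever choice of $p$ where the bound already drops below $14$. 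A secondary technical point is handling the characteristics where $S_a$ (hence $X_a$) is singular or where $5\mid p$: there the quotient construction and the K3 identification need the minimal-resolution caveats, but since we only need \emph{one} good prime this is not a serious difficulty. Once $\mathrm{rank}\,T(X_a)$ is determined, $\mathrm{rank}\,T(S_a)=12$ follows from $V_a\subset T(X_a)$ and \eqref{eq:V^4}, completing the upper bound $\rho(S_a)\le 41$ and, with Lemma~\ref{lem-lowerbound}, the proof of Theorem~\ref{thm}.
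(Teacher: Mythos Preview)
Your overall architecture --- pass to a K3 quotient by a sign $-1$ transposition, then control its Picard number via reduction modulo good primes and the Artin--Tate/van Luijk--Kloosterman mechanism --- is exactly what the paper does. But the numerics in your plan are miswired, and this inverts the direction of the argument you actually need.

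The error is in your identification of $T(X_a)$. You write that ``$V_a$ embeds in $T(X_a)$'' and then that $\mathrm{rank}\,T(S_a)=8$ would force $\mathrm{rank}\,T(X_a)\ge 8$, i.e.\ $\rho(X_a)\le 14$. In fact pull-back identifies $T(X_a)$ with the $\imath^*$-invariant part of $T(S_a)$, which by \eqref{eq:V^4} is a \emph{single} copy of $V_a$: one has $T(X_a)=V_a$, not $T(S_a)\hookrightarrow T(X_a)$. Hence $\mathrm{rank}\,T(X_a)\in\{2,3\}$ and $\rho(X_a)\in\{19,20\}$, not $\{10,14\}$. The dichotomy \eqref{eq:rkT} translates as: $\mathrm{rank}\,T(S_a)=8\Leftrightarrow\rho(X_a)=20$, and $\mathrm{rank}\,T(S_a)=12\Leftrightarrow\rho(X_a)=19$.

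Consequently the task is the opposite of what you propose: you must establish the \emph{upper} bound $\rho(X_a)\le 19$ (rule out $20$), not a lower bound $\rho(X_a)\ge 16$; a lower bound cannot separate $19$ from $20$. The paper assumes $\rho(X_a)=20$, checks at $p=19,23$ that $\rho(X_a\otimes\bar\F_p)=20$ (so specialisation has finite index and discriminant square classes must agree), and then computes via Lemma~\ref{lem} the possible discriminant square classes from the quadratic Frobenius factor at each prime; no value at $p=19$ matches any at $p=23$, giving the contradiction. Your step (i), building up a large visible sublattice on $X_a$ to push $r_0$ high, is therefore unnecessary for this lemma: all the work is done by the two-prime discriminant obstruction on the K3 quotient.
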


We shall prove the lemma by constructing a suitable K3 quotient $X_a$ of $S_a$.
Before going into the details, we  comment briefly on other possible approaches.
In a similar situation of a surface with $\Sym_5$ action in \cite{vGS},
the authors alluded to modularity in order to rule out the analogous small rank alternative.
This line of argument does not apply here since $S_a$ is not defined over $\Q$.
Instead one can use the Artin-Tate conjecture to compare square classes of discriminants
of reductions modulo different primes.
For $S_a$, however, this approach would always result in perfect 4th powers
due to the splitting \eqref{eq:V^4}.
This is the main reason to switch to a quotient surface of $S_a$ that is a K3 surface
(or any other surface of geometric genus 1).

In order to prove Lemma \ref{lem:T} 
our first aim is to construct a quotient surface of $S$ that has geometric genus 1.
As indicated above,
the easiest way to achieve this builds on an involution interchanging exactly two homogeneous coordinates, say 
\begin{eqnarray*}
\imath: \;\;\;\;\; S\;\;\;\;\;\;\;\;  & \to & \;\;\;\;\;\;\;\;\;\; S\\
~[x_0,\hdots,x_4] & \mapsto & [x_1,x_0,x_2,x_3,x_4].
\end{eqnarray*}
Since $\imath^*$ fixes exactly one holomorphic 2-form on $S$ up to scaling,
we find 
\[
T(S)^{\imath^*=1}=V
\]
for the Hodge structure $V$ alluded to in \eqref{eq:V^4}.

The involution 
fixes the degree 5 curve $\{x_0=x_1\}$ on $S$ and the isolated point $[1,-1,0,0,0]$, 
yielding an $A_1$ singularity on the quotient surface $S/\imath$. 
We introduce the invariant coordinates
\[
u=x_0x_1,\;\;\; v=x_0+x_1.
\]
Then $S/\imath$ is birationally given by the resulting equation
 in weighted projective space $\PP[1,1,1,1,2]$
with weighted homogeneous coordinates $x_2,x_3,x_4,v,u$.
Expressing $x_4$ through $s_1$ and setting affinely $v=1$,
we obtain the affine equation
\begin{eqnarray*}
S/\imath:\;\;(x_3+1) (x_2+1) (x_2+x_3) (x_2^2+x_2 x_3+x_2+x_3+1+x_3^2) (\lambda +1)\\
 =(\lambda x_2 x_3-\lambda+\lambda x_2^2 x_3+\lambda x_2 x_3^2-1) u+(\lambda +1) u^2.
\end{eqnarray*}
This realises $S/\imath$ as  a double sextic with rational double point singularities
over the affine $x_2, x_3$-plane.
Hence its minimal projective resolution $X$ is a K3 surface.
By construction, $X$ carries the Hodge structure
\begin{eqnarray}
\label{eq:T(X)}
T(X)= V,
\end{eqnarray}
and the corresponding equality holds for  Galois representations.

We can also interpret $X$ as an elliptic surface over the affine $x_3$ line, say.
This fibration has eight obvious sections induced by the lines on $S_a$;
these are given by the two roots of $u$ at 
$x_2= 0, -1, -x_3, -1-x_3$.
Converting to Weierstrass form, we directly find a 2-torsion section; 
translation to $(0,0)$ yields the following equation
in the standard coordinates $x=x_2, t=x_3$:
\begin{eqnarray}
\label{eq:ell}
X: && 
u^2 = x (x^2+A(t)x+B(t))
\end{eqnarray}
\begin{eqnarray*}
A & = & 
\lambda^2 t^4-(4+8 \lambda+2 \lambda^2) t^3-(24 \lambda+12+11 \lambda^2) t^2\\
&& \;\;\;\;\;\;\;\; -4 (2 \lambda+3) (1+\lambda) t-4 (1+\lambda)^2 \\
B & = & {
16 t (t+1) (1+\lambda)^2 
[(2 \lambda+1) (t^4+t^3)+(3 \lambda+2) t^2+(2 \lambda+2) t+1+\lambda].}
\end{eqnarray*}
The discriminant reveals generally 6 singular fibres of type $I_2$ in Kodaira's notation
at the zeroes of $B$
and a split-multiplicative fibre of type $I_4$ at $\infty$. 

\subsection{Special K3 surface $X_a$}

We shall now specialise to the quintic $S_a$ and its K3 quotient $X_a$
where $a$ is
 given by \eqref{eq:a} as before.
By \eqref{eq:rkT} we know that the irreducible Hodge structure $V_a$ has rank 2 or 3,
so $X_a$ has Picard number $19$ or $20$ over $\C$ by \eqref{eq:T(X)}.

\begin{Proposition}
\label{prop}
The complex K3 surface $X_a$ has Picard number 19.
\end{Proposition}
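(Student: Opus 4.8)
The plan is to eliminate the alternative $\rho(X_a)=20$ left open by \eqref{eq:rkT} and \eqref{eq:T(X)}, by a reduction argument of van Luijk \cite{vL} and Kloosterman \cite{RK} type. Recall that $S_a$, and with it the elliptic K3 surface $X_a$ of \eqref{eq:ell}, is defined over the quartic field $F=\Q(b)$, $b^4-b^3+1=0$ from \eqref{eq:a}. The eight sections of \eqref{eq:ell} induced by the lines on $S_a$, together with the zero section, a general fibre and the non-identity components of the six $I_2$ fibres and of the $I_4$ fibre at $\infty$, span a sublattice of $\NS(X_a)$; computing the height pairing on the elliptic surface one checks that the eight sections are independent, so this sublattice has rank $11+8=19$. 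Hence $\rho(X_a)\ge 19$ over any field of good reduction, in accordance with \eqref{eq:T(X)} and \eqref{eq:rkT}, and what remains is the upper bound $\rho(X_a)\le 19$. Since the Picard number of a K3 surface over a finite field is always even, a single good reduction can only return $\rho\in\{20,22\}$ and never confirms $\rho=19$; the argument therefore has to play off two reductions against each other.

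First I would fix a prime $\mathfrak p$ of $F$ at which $X_a$ has good reduction and at which the fibre configuration of \eqref{eq:ell} does not degenerate (the six $I_2$ remain distinct and the $I_4$ at $\infty$ persists); let $\F_q$ be its residue field. Specialising the Weierstrass model \eqref{eq:ell} at $\lambda=a\bmod\mathfrak p$, one counts $X_a(\F_{q^m})$ for a few small $m$ by adding up the point counts on the fibres. Combined with the functional equation and the $19$ algebraic classes above (which become Frobenius-invariant over an explicit finite extension), this determines the characteristic polynomial $P_{\mathfrak p}(T)$ of Frobenius on $H^2_{\mathrm{et}}$, in particular its transcendental factor, of degree at most $22-19=3$. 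As elliptic K3 surfaces over finite fields satisfy the Tate conjecture (this also follows from Proposition \ref{prop:tate}), $\rho(X_a\otimes\bar\F_q)$ equals the number of roots $\alpha$ of $P_{\mathfrak p}$ with $\alpha/q$ a root of unity. For $\mathfrak p$ with non-supersingular reduction one gets $\rho(X_a\otimes\bar\F_q)=20$, so the transcendental factor is a quadratic $T^2-a_{\mathfrak p}T+q^2$ with $a_{\mathfrak p}^2<4q^2$. By the Artin--Tate formula --- a theorem for these surfaces --- and the fact that $|\Br(X_a\otimes\F_q)|$ is a perfect square, the square class of $\disc\NS(X_a\otimes\bar\F_q)$ in $\Q^\times/(\Q^\times)^2$ is then computed from $a_{\mathfrak p}$ and $q$, namely it agrees with that of $a_{\mathfrak p}^2-4q^2$ up to a controlled power of the residue characteristic.

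Running this for two primes $\mathfrak p_1,\mathfrak p_2$ of $F$, with residue fields $\F_{q_1},\F_{q_2}$, selected so that the two resulting square classes are different, finishes the proof: were $\rho(X_a)=20$ over $\C$ (equivalently over $\bar\Q$), then reduction at $\mathfrak p_i$ would embed the rank $20$ lattice $\NS(X_a\otimes\bar\Q)$ with finite index into $\NS(X_a\otimes\bar\F_{q_i})$, forcing all three discriminants into one square class, against the choice of the $\mathfrak p_i$. Hence $\rho(X_a)=19$, which by \eqref{eq:T(X)} and \eqref{eq:rkT} is Lemma \ref{lem:T}. The main obstacle is the search for $\mathfrak p_1,\mathfrak p_2$: each must give good, non-supersingular reduction of \eqref{eq:ell} with the full singular fibre configuration, and together they must yield distinct square classes; while the point counts over $\F_{q_i}$ and $\F_{q_i^2}$ are routine, it is conceivable in view of the large automorphism group of $S_a$ that $X_a$ reduces supersingularly at an inconvenient set of primes, so some search is needed. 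Alternatively, a Kloosterman-style argument could make do with a single prime by feeding in the square class that a hypothetical singular (that is, $\rho=20$) complex $X_a$ would impose on $\disc\NS$, but the two-prime comparison is the more robust route.
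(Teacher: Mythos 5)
Your plan is essentially the paper's proof: assume $\rho(X_a)=20$, reduce at two good primes, extract the quadratic transcendental factor of Frobenius, and use Artin--Tate to compare square classes of $\disc\NS$ with the characteristic-zero lattice via the specialisation embedding; the paper carries this out at $p=19,23$ (Table \ref{tab:traces}). Two remarks. First, your side-argument for the lower bound is off in its details: on $X_a$ the fibration \eqref{eq:ell} has \emph{eight} $I_2$ fibres plus the $I_4$ and a $2$-torsion section, so the trivial lattice already has rank $13$ and the eight line-induced sections cannot be independent (that would force $\rho\geq 21$); this is harmless, since $\rho(X_a)\in\{19,20\}$ is already guaranteed by \eqref{eq:rkT} and \eqref{eq:T(X)}, which is all the argument needs. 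Second, the step you gloss as ``up to a controlled power of the residue characteristic'' is exactly where the paper inserts Lemma \ref{lem}: Artin--Tate reads off $\disc\NS$ only when $\NS(X_a\otimes\bar\F_p)$ is defined over the field used, and the paper circumvents computing that field by showing the splitting field of $\mu_q(T)$ is stable under base extension (using \cite{A} and the elliptic fibration to exclude the supersingular degeneration), so the square class is that of $a_q^2-4q^2$ on the nose. Note also the paper's economical twist: it never pins down $a_q$, only its residue class from $\#X_a(\F_p)\bmod p$, and checks that none of the four candidate square classes at $19$ matches any of the four at $23$ --- this spares the point counts over extensions that your version would require.
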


In order to prove the proposition,
we assume on the contrary that $\rho(X_a)=20$ 
and establish a contradiction by reducing modulo different good primes
and comparing the square classes of the discriminants of 
the N\'eron-Severi lattices
by virtue of the Artin-Tate conjecture.
This method was pioneered in \cite{vL}, refined in \cite{RK} and applied in a similar context in \cite{S-NS}.

To be on the safe side when applying the reduction method,
we compute the primes of bad reduction for $X_a$ directly.
This is easily achieved thanks to the elliptic fibration 
which specialises from the pencil $X_\lambda$.
On $X_a$ it attains 8 singular fibres of type $I_2$, 
each of them defined over the ground field $k(a)$
(in addition to the $I_4$ fibre at $\infty$).
For the bad primes it suffices to study the degeneration of this fibration upon reduction mod $p$,
i.e.~whether the types of singular fibers change upon reduction.

\begin{Lemma}
\label{lem:bad}
$X_a$ has good reduction outside characteristics $\{2,3,5,11,17,433\}$.
\end{Lemma}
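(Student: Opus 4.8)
The plan is to read off the bad primes directly from the elliptic fibration \eqref{eq:ell} specialised to $\lambda = a$, using the philosophy that a K3 surface obtained as the minimal resolution of a rational elliptic (here: elliptic K3) model has good reduction at $p$ as soon as the reduction mod $p$ of the Weierstrass model still defines an elliptic K3 surface with the same configuration of singular fibres. So the first step is to compute the discriminant $\Delta(t)$ and the $j$-invariant of the generic fibre of \eqref{eq:ell} after substituting $\lambda = a$, where $a = -2/(b+2)$ and $b^4 - b^3 + 1 = 0$; concretely one clears denominators to work over $\Z[b]/(b^4-b^3+1)$ and records the discriminant of $\Delta(t)$ (as a polynomial in $t$) together with the leading/trailing coefficients, since a prime is potentially bad only if it divides one of these quantities, i.e.\ only if two of the eight $I_2$-fibres collide, an $I_2$ merges with the $I_4$ at $\infty$, or a fibre degenerates further.

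Second, I would factor the relevant resultant/discriminant expression in the ring of integers of $\Q(b)$ (equivalently compute its norm down to $\Q$) and list the rational primes dividing it. Here the characteristics $2,3,5$ enter on general grounds: $2$ and $3$ because Weierstrass models and double-cover constructions are always delicate there (and indeed $S_a$ itself has bad reduction issues at $2,3,5$ — cf.\ Lemma \ref{lem:smooth} and the comments on $\Sym_5$ and characteristic $5$), and one should check these by hand rather than trusting the naive discriminant. The primes $11, 17, 433$ are then exactly the ``interesting'' primes coming out of the norm computation: $17$ already appeared in the smoothness discussion of the pencil, $11$ is the $p \equiv 1 \bmod 5$ prime where the Dwork/Fermat analysis in Lemma \ref{lem:smooth} produced extra singular parameters, and $433$ is presumably the prime where the defining polynomial $b^4-b^3+1$ (or the relevant resultant) first acquires a repeated root, i.e.\ where the four Galois conjugates of $a$ start to collide or where an $I_2$ fibre meets the $I_4$.

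Third, for each prime $p \notin \{2,3,5,11,17,433\}$ I would verify positively that nothing degenerates: the polynomial $B(t)$ in \eqref{eq:ell} (with $\lambda=a$) still has $6$ distinct simple roots mod $p$, none of which is $0$ or $-1$ (so the $I_2$ fibres stay away from the two $I_2$-fibres coming from the explicit lines and from each other), the fibre at $\infty$ is still $I_4$, and $b^4-b^3+1$ stays separable mod $p$ so that $k(a)\otimes \F_p$ is \'etale and $a \bmod p$ is well-defined; this guarantees that the reduction is still an elliptic K3 with the same $8\times I_2 + I_4$ configuration, hence smooth, hence $X_a$ has good reduction at $p$. The main obstacle I expect is bookkeeping rather than conceptual: the coefficients of $A$ and $B$ in \eqref{eq:ell} are already sizeable quartics in $t$ and $\lambda$, so after substituting the algebraic number $a$ and taking norms one gets a large integer to factor, and one must be careful to distinguish ``apparent'' bad primes (where the chosen integral Weierstrass model degenerates but a twist or coordinate change repairs it) from genuine ones — in particular one has to check separately that $2,3,5$ really are bad and that no prime between the obvious small ones and $433$ sneaks in. Granting the factorisation of that norm, the verification at each remaining $p$ is uniform and immediate, so the statement follows.
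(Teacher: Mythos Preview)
Your overall strategy---read off bad primes from degenerations of the elliptic fibration on $X_a$---is exactly what the paper does. But the execution has a real gap, and your criterion is stated too strictly to close it.

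The issue is your claim that the ``interesting'' primes emerging from the discriminant computation are exactly $11,17,433$. In fact, when you carry out the computation you sketch (discriminant/resultant of $\Delta(t)$ over $\Z[b]/(b^4-b^3+1)$ and norm down to $\Q$), two further primes appear: $83$ and $151$. At each of these, for the $\F_p$-rational value of $a$, an $I_1$ fibre collides with an $I_2$ fibre to produce a fibre of Kodaira type $III$. So your Step~3 verification (``$B(t)$ still has the right number of distinct roots, fibre configuration unchanged'') \emph{fails} at $p=83,151$. Under your stated criterion (``good reduction as soon as the configuration of singular fibres is unchanged'') you would then be forced to list these as bad primes, contradicting the lemma. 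The point the paper makes, and which your outline misses, is that an $I_1+I_2\to III$ merger does not introduce any new surface singularity on the Weierstrass model: the $A_1$ from the $I_2$ becomes the $A_1$ of the $III$, the Euler number is preserved, and the minimal resolution is still a smooth K3. Hence $X_a$ has good reduction at $83$ and $151$ despite the change in fibre configuration. Without this observation your argument does not prove the lemma as stated.

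Two smaller remarks. First, your heuristic for $433$ is off: the discriminant of $b^4-b^3+1$ is $229$, not $433$; the prime $433$ (like $11$ and $17$) arises because $a$ reduces to one of the singular parameters of Lemma~\ref{lem:smooth} (in fact $a\equiv -51/50$). Second, note that on $X_a$ the generic configuration is $8\,I_2+I_4$ together with some $I_1$'s (not just the six $I_2$ from $B$), so the collision analysis must also track the $I_1$ fibres---which is precisely where $83$ and $151$ enter.
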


\begin{proof}
It is an easy exercise using the discriminant to verify that 
the types of singular fibres of
the given elliptic fibration 
do not change
outside the above characteristics and $83, 151$.
In the latter two characteristics (and exactly for $a\in\F_p$
coming from the $\F_p$-rational value of $b$ from \eqref{eq:a}), 
the fibration degenerates by merging fibres of type $I_1$ and $I_2$ to a fibre of type $III$.
In other words, the two nodes of the $I_2$ fibre come together
without reduction causing a singularity.
Thus $X_a$ has good reduction at all primes dividing $83$ and $151$,
and the lemma follows.
\end{proof}

On the quintic $S_a$, the above characteristics (except for $2$) are visible directly in terms of 
Lemma \ref{lem:smooth}:
the value of $a$ coming from the $\F_p$-rational root of \eqref{eq:a}
equals some exceptional parameter for $\lambda$ from the lemma.
This is all there is:

\begin{Corollary}
\label{cor:bad}
$S_a$ has good reduction outside characteristics $\{3,5,11,17,433\}$.
\end{Corollary}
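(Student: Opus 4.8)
The plan is to transfer the information about bad primes from the K3 quotient $X_a$ (Lemma \ref{lem:bad}) to the quintic $S_a$ itself, using the observation already made in the text: each of the characteristics $3,5,11,17,433$ arises because the $\F_p$-rational root of $b^4-b^3+1=0$ produces a value of $a$ that coincides with one of the exceptional parameters listed in Lemma \ref{lem:smooth}. First I would recall that by Lemma \ref{lem:bad} the K3 surface $X_a$ has good reduction away from $\{2,3,5,11,17,433\}$, and that $X_a$ is the minimal resolution of the double sextic $S_a/\imath$. The only extra characteristic appearing on the K3 side which is not already forced by the geometry of $S_a$ is $p=2$: there the involution $\imath$ and the conics \eqref{eq-conic} behave pathologically (the conics $C_{i_1}$ are singular in characteristic $2$), so the quotient construction itself degenerates even though $S_a$ need not. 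Hence I would argue that $p=2$ is an artifact of the quotient and can be dropped once we look at $S_a$ directly.

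The core step is then to show that $S_a$ is smooth — i.e.\ that the chosen model of the pencil \eqref{eq-pencil} has a smooth member at $\lambda=a$ — for every prime $p\notin\{3,5,11,17,433\}$. By Lemma \ref{lem:smooth}, the pencil $\{S_\lambda\}$ is non-smooth exactly at $\lambda\in\{-1,-\tfrac32,-\tfrac{51}{50},-\tfrac{13}{25},-\tfrac12,\infty\}$, uniformly in the characteristic (this is precisely the characteristic-free content of that lemma). So $S_a$ fails to be smooth modulo $p$ if and only if the reduction $\bar a\in\bar\F_p$ of the algebraic number $a=-2/(b+2)$ hits one of these six values, or if the defining quintic itself degenerates (i.e.\ $p$ divides the content we cleared to get the proper model, which happens only for $p\in\{2,3,5\}$). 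Translating $a$ back to $b$ via $b=-2-2/a$, the conditions $\bar a\in\{-1,-\tfrac32,-\tfrac{51}{50},-\tfrac{13}{25},-\tfrac12\}$ become explicit congruence conditions on $b$; combined with $b^4-b^3+1=0$ these determine a finite set of primes. A direct computation — resultants of $b^4-b^3+1$ with the relevant linear polynomials in $b$, together with the discriminant of $b^4-b^3+1$ to control ramification — shows this set is exactly $\{3,5,11,17,433\}$, matching the primes already identified on $X_a$. (One checks, for instance, that $b^4-b^3+1$ has a root giving $a\equiv -1$ only mod $3$, a root giving $a\equiv -\tfrac12$ only mod the prime(s) dividing some small integer, and so on; the prime $433$ enters through one of the $p\equiv1\bmod 5$ cases, consistent with the analysis in the proof of Lemma \ref{lem:smooth}.)

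Finally I would assemble the two halves: away from $\{3,5,11,17,433\}$ the quintic $S_a$ has good (i.e.\ smooth) reduction by the argument above, and conversely at each of these five characteristics $S_a$ genuinely degenerates — either the quintic polynomial itself does (for $p=3,5$, where we must pass to the proper model and $S_0$ remains smooth, cf.\ Proposition \ref{prop:5}) or $\bar a$ coincides with an exceptional parameter from Lemma \ref{lem:smooth} (for $p=11,17,433$). This gives the stated characterisation. The main obstacle is the bookkeeping in the previous paragraph: one must be careful that "good reduction" for $S_a$ means smoothness of the \emph{proper} model of the pencil, so that the genuinely special behaviour in characteristics $2,3,5$ — where $s_1$ is substituted and common factors are eliminated — is handled separately from the "generic" degenerations at $\lambda=a$, and one must make sure no additional prime sneaks in from the denominator of $a$ or from ramification in $\Q(b)/\Q$; both are settled by the explicit resultant and discriminant computations, which turn out to involve only the primes already on the list.
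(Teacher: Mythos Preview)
Your core strategy matches the paper's: invoke Lemma~\ref{lem:smooth} and check for which primes the reduction of $a$ hits one of the six singular parameters, computed essentially by resultants. But there are two issues.

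First, a genuine gap at $p=2$. You assert that $p=2$ ``is an artifact of the quotient and can be dropped once we look at $S_a$ directly,'' but you never actually verify that $S_a$ is smooth in characteristic~$2$. Saying that the conics \eqref{eq-conic} are singular and the quotient construction degenerates tells you nothing about the smoothness of $S_a$ itself; in principle the K3 model could degenerate while $S_a$ simultaneously acquires a singularity for unrelated reasons. The paper's argument here is short and direct: since $a=-2/(b+2)$, one has $a\equiv 0$ modulo every prime $\p\mid 2$, and $S_0$ is smooth outside characteristics $3,13,17$ by the Jacobi criterion. That one line is exactly what is missing from your treatment.

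Second, some misframing. The paper's proof does not use Lemma~\ref{lem:bad} at all; it works entirely from Lemma~\ref{lem:smooth}, so your opening plan to ``transfer information from $X_a$'' is unnecessary scaffolding. Also, your description of $p=3,5$ as primes where ``the quintic polynomial itself degenerates'' (as opposed to $p=11,17,433$, where $\bar a$ hits a singular parameter) is not accurate: the mechanism is the same at all five primes. For instance, in characteristic~$5$ the $\F_5$-rational root $b=3$ of $b^4-b^3+1$ gives $b+2\equiv 0$, so $a=\infty$, which is one of the singular values; in characteristic~$3$ one finds $a\equiv -\tfrac12$. There is no separate ``degeneration of the model'' phenomenon to track at these primes once one is working with the proper model of the pencil.
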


\begin{proof}
By Lemma \ref{lem:smooth} it suffices to check when $a$ reduces to some 
singular parameter for $\lambda$.
That is, for each of these parameters over $\Q$, we substitute the resulting value of $b$
in the equation \eqref{eq:a}
and compute the corresponding primes as claimed.
At the primes  $\p\mid 2$,
we should note that any  $a$  from \eqref{eq:a} reduces to zero modulo $\p$.
Since $S_0$ is smooth outside characteristics $3,13,17$ by the Jacobi criterion,
this suffices to conclude the proof.
\end{proof}



\subsection{Proof of Proposition \ref{prop}}
As a preparation we recall the Lefschetz fixed point formula for $X_a$.
Over some finite field $\F_q$ ($q=p^e, p$ prime) containing a root $a$ from \eqref{eq:a},
it returns with some auxiliary prime $\ell$
\[
\# X_a(\F_q) = 1 + \mbox{tr Frob}_q^*(H_\text{\'et}^2(X_a\otimes\bar\F_p, \Q_\ell)) + q^2.
\]
On divisors, Frob$_q^*$ has eigenvalues $\zeta$ for
roots of unity $\zeta$.
In particular, the trace on the algebraic subspace  inside $H_\text{\'et}^2(X_a\otimes\bar\F_p, \Q_\ell(1))$ 
spanned by $\NS(X_a\otimes\bar\F_p)$ via the cycle class map
equals an integer.
Presently $\rho(X_a\otimes\bar\F_p)=20$ or $22$ by assumption, since $\rho=21$ is ruled out by \cite{A}.
By the above considerations, any non-congruence 
\begin{eqnarray}
\label{eq:nc}
\# X_a(\F_q)\not\equiv 1\mod q 
\end{eqnarray}
implies $\rho(X_a\otimes\bar\F_q)\leq 20$.
This non-congruence is easily verified at specific primes;
for instance, Table \ref{tab:traces} shows 
$\rho(X_a\otimes\bar\F_p)\leq 20$ for $p=19, 23$ and the respective choice of  solution to \eqref{eq:a} in $\F_p$.
Thus our assumption implies the equality $\rho(X_a\otimes\bar\F_p) = 20$, 
and in fact the validity of the Tate conjecture for $X_a$ over 
any finite extension of $\F_{19}$ and $\F_{23}$ 
(alternatively one can use the elliptic fibration with section on $X_a$ and appeal to \cite{ASD}).

Consider now some prime $p$ such as $p=19,23$ where we can prove by the above elementary means
that $\rho(X_a\otimes\bar\F_p)=20$.
Then the characteristic polynomial of Frob$_q^*$ on $H_\text{\'et}^2(X_a\otimes\bar\F_p, \Q_\ell)$ factors into
a product of cyclotomic polynomials (shifted by $q$) and a single quadratic factor
\begin{eqnarray}
\label{eq:a_q}
\mu_q(T) = T^2-a_qT+q^2.
\end{eqnarray}
where $a_q\equiv \#X_a(\F_p)-1\not\equiv 0\mod q$. 
Moreover $a_q\in\{-2q,\hdots,2q\}$ by the Weil conjectures.
Thus the parity of $\# X_a(\F_q)$ modulo $q$ predicts four possibilities for the  trace $a_q$
without any further knowledge about the Galois action on divisors.
(In fact the Galois action cannot be overly complicated 
since $S_a$ contains numerous non-trivial divisor classes over $\F_q$,
such as
all components of the 8 $I_2$ fibres and the  $I_4$ fibre at $\infty$ and the infinite section inherited from the generic member.)

Eventually, we want to apply the Artin-Tate conjecture \cite{T1} to $X_a$;
it is equivalent to the Tate conjecture by \cite{Milne}, so it holds in the present situation.
There is a little complication in mimicing the technique from \cite{vL}: 
the Artin-Tate conjecture for $X_a/\F_q$
allows us to read off the square class of the discriminant of $\NS(X_a\otimes\bar\F_p)$
from the characteristic polynomial $\mu_q(T)$ 
a priori only if $\NS(X_a\otimes\bar\F_p)$ is actually defined over $\F_q$,
i.e.~generated by divisors defined over $\F_q$.
Presently this need not hold over $\F_p$.
However, as $\mu_q(T)$ is quadratic,
there is a simple way to circumvent this problem and avoid computing explicitly the minimal extension $\F_q$
where $\NS(X_a)$ is defined.
For this purpose we introduce the following auxiliary general result.

\begin{Lemma}
\label{lem}
Let $X/\F_q$ be a K3 surface with geometric Picard number 20.
Consider the characteristic polynomial $\mu_q(T)$ as above.
Let $d\in\Z$ such that $\mu_q(T)$ splits in $\Q(\sqrt{d})$.
Then the square class of the discriminant of $\NS(X\otimes\bar\F_q)$ is given by $d$.
\end{Lemma}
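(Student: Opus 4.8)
The plan is to reduce to the case where $\NS(X\otimes\bar\F_q)$ is generated by divisors defined over the ground field — in which situation the Artin--Tate conjecture reads the square class of $\disc\NS$ directly off the characteristic polynomial of Frobenius — and then to observe that the outcome does not depend on the finite extension one is forced to pass to, so that no such extension need actually be computed.

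First I would record that $\NS(X\otimes\bar\F_q)$ is already defined over some finite extension $\F_{q^n}$: Frobenius fixes an ample class and hence acts on this finitely generated lattice through a finite group of isometries, and we may clearly take $n$ even. Over $\F_{q^n}$ all $20$ Frobenius eigenvalues on the algebraic part of $H^2_\text{\'et}$ equal $q^n$, so
\[
\det\bigl(1-T\,\mathrm{Frob}_{q^n}^*\mid H^2_\text{\'et}(X\otimes\bar\F_q,\Q_\ell)\bigr)=(1-q^nT)^{20}\bigl(1-a_{q^n}T+q^{2n}T^2\bigr),
\]
where the quadratic factor has roots $\alpha^n,\bar\alpha^n$, with $\alpha,\bar\alpha$ the roots of $\mu_q$, so $a_{q^n}=\alpha^n+\bar\alpha^n$. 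Now I would invoke the Artin--Tate conjecture for the K3 surface $X\otimes\F_{q^n}$ — it holds, being equivalent to the Tate conjecture by \cite{Milne}, which is available here because a K3 surface over $\bar\F_q$ with geometric Picard number $20$ is necessarily ordinary. Feeding the factorisation above into the Artin--Tate formula, and using that $\NS$ is torsion free, that the order of the Brauer group is a perfect square, and that for even $n$ the power of $q$ occurring in the formula is a perfect square too, one obtains
\[
\disc\bigl(\NS(X\otimes\bar\F_q)\bigr)\equiv a_{q^n}-2q^n \pmod{(\Q^\ast)^2},
\]
the lone sign $(-1)^{\rho-1}=-1$ being already built into this congruence.

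It then remains to identify $a_{q^n}-2q^n$ with $d$ modulo squares. Here $\mu_q$ is irreducible over $\Q$ (a rational root would force the geometric Picard number up to $22$), so $\Q(\alpha)=\Q(\sqrt d)$; writing $\tau$ for the nontrivial element of $\mathrm{Gal}(\Q(\sqrt d)/\Q)$ one has $\tau(\alpha^{n/2}-\bar\alpha^{n/2})=-(\alpha^{n/2}-\bar\alpha^{n/2})$, hence $\alpha^{n/2}-\bar\alpha^{n/2}=c\sqrt d$ for some $c\in\Q^\ast$ (and $c\neq 0$, since otherwise $\alpha/\bar\alpha$ would be a root of unity and the geometric Picard number would jump to $22$). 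Since $n$ is even,
\[
a_{q^n}-2q^n=\alpha^n+\bar\alpha^n-2(\alpha\bar\alpha)^{n/2}=\bigl(\alpha^{n/2}-\bar\alpha^{n/2}\bigr)^2=c^2d,
\]
which lies in the square class of $d$; combining the two displayed congruences proves the lemma.

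The step that needs real care is the first one: the Artin--Tate formula pins down the discriminant of the N\'eron--Severi lattice only once that lattice is defined over the base field, so one genuinely has to pass to some $\F_{q^n}$, and one must ensure the resulting square class is independent of the (deliberately uncomputed) degree $n$. The last display does exactly that, exhibiting $a_{q^n}-2q^n$ as $d$ times the square $c^2$ for every admissible even $n$; the Brauer-group order and the power of $q$ in the Artin--Tate formula are harmless precisely because both become perfect squares once $n$ is chosen even.
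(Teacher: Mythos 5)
Your strategy is the same as the paper's: pass to a finite extension $\F_{q^n}$ over which $\NS(X\otimes\bar\F_q)$ is defined, apply the Artin--Tate formula there (the Brauer order and, for even $n$, the power of $q$ being perfect squares), and identify the resulting square class with the splitting field of $\mu_q$ via $a_{q^n}-2q^n=\bigl(\alpha^{n/2}-\bar\alpha^{n/2}\bigr)^2=c^2d$; the paper does exactly this with $n=2$ after replacing $q$ by a power, using that the splitting field of $\mu_q$ is stable under extension, and writes the identity as $4q^2-a_q^2=-M^2D$.

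The one step you pass over too quickly is the availability of the Tate (hence, by Milne, Artin--Tate) conjecture for $X$. You assert that geometric Picard number $20$ forces $X$ to be ordinary and take Tate from there; both halves of that need justification, and the first is precisely where the paper's proof does its real work. Artin's inequality $\rho\leq 22-2h$ gives ordinarity only in the finite-height case; the infinite-height (Artin-supersingular) possibility must be excluded, and the paper does this by noting that $\rho\geq 5$ forces $\NS$ to represent zero, hence $X$ carries an elliptic fibration, so Artin's Theorem 1.7 would force $\rho=22$, a contradiction (the unconditional Artin conjecture was not available to the authors). Moreover ``ordinary $\Rightarrow$ Tate'' is Nygaard's theorem, which you would have to invoke explicitly; the paper avoids it altogether, since all it needs is that $\alpha$ is not $q$ times a root of unity, which the supersingularity exclusion yields directly and which then gives Tate by elementary eigenvalue counting. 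Your appeals to ``the Picard number would jump to $22$'' (for the irreducibility of $\mu_q$ and for $c\neq 0$) lean on the same point and are fine once it is in place. So the proposal is correct modulo making this one input precise, and otherwise it is essentially the paper's proof.
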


\begin{proof}
Denote the roots of $\mu_q(T)$ by $\alpha, \bar\alpha$.
We will need that $\alpha$ does not equal $q$ times a root of unity.
Equivalently the Tate conjecture holds for $X$, as checked for $X_a$ in conjunction with \eqref{eq:nc}.
For arbitrary $X$, assume to the contrary that $\alpha$ takes the shape $q$ times a root of unity.
Then $X$ has infinite height, so it is supersingular in Artin's sense.
On the other hand, $X$ admits an elliptic fibration,
induced by a divisor class with square zero 
(this holds for any K3 surface with $\rho\geq 5$
since then $\NS$ represents $0$).
But then $\rho=22$ by \cite[Thm.~1.7]{A}, giving the required contradiction.

Next we claim that the splitting field of $\mu_q(T)$ is stable under extension.
To see this, we compute $\mu_{q^e}(T)=(T-\alpha^e)(T-\bar\alpha^e)$ for any $e\in\N$.
Then we use that $\alpha^e\not\in\Q$ by the above considerations.

As a consequence we can assume that $q$ is chosen in such a way 
that $\NS(X\otimes\bar\F_q)$ is already defined over $\F_{q^2}$,
so that $D=\mbox{disc}(\NS(X\otimes\bar\F_q))=\mbox{disc}(\NS(X\otimes\F_{q^2}))$.
Note that $D<0$ by the Hodge index theorem.
The Artin-Tate conjecture \cite{T1} then predicts that the square class of $-D$ is given by
$\mu_{q^2}(T)$ evaluated at $T=q^2$ up to a factor of $q$:
\begin{eqnarray}
\label{eq:sqcl}
2q^2-a_{q^2} = -M^2D.
\end{eqnarray}
Here $M^2$ is the size of the Brauer group of $X$ over $\F_{q^2}$.
Generally we have $a_{q^2}=a_q^2-2q^2$, so \eqref{eq:sqcl} simplifies as
\begin{eqnarray}
\label{eq:sqcl2}
4q^2-a_q^2 = -M^2D.
\end{eqnarray}
But this is equivalent to the splitting field of $\mu_q(T)$ being exactly $\Q(\sqrt{D})$.
\end{proof}



\begin{Remark}
As in \cite{S-NS} one can also deduce that $q$ splits into two principal ideals in $\Q(\sqrt{D})$.
In other words, if $q=p^e$, then the prime factors of $p$ have order dividing $e$ in the class group $\mbox{Cl}(\Q(\sqrt{D}))$ 
which gives a severe restriction on $e$.
\end{Remark}

Now let us return to our specific K3 surface $X_a$.
Counting points over $\F_p$ for $p=19,23$
we infer from Table \ref{tab:traces} that $\rho(X_a\otimes\bar\F_p)=20$ 
at both primes by the congruence argument from \eqref{eq:nc}.

\begin{table}[ht!]
$$
\begin{array}{c|c|c|c}
p & \# X_a(\F_p) & a_p & D\\
\hline

19 & 676 &                               29 &  -67\\                                     
&&                              10 & -21\\
&&                                -9 & -29\cdot 47\\
&& -28 & -3\cdot 5\cdot 11\\
\hline
23 & 924 & 26 & -10\\
&& 3 & -43\\
&&-20 & -3\cdot 11\cdot 13\\
&& -43 & -3\cdot 89
\end{array}
$$
\caption{Possible discriminants of $\NS(X_a\otimes\bar\F_p)$}
\label{tab:traces}
\end{table}

Recall the original assumption $\rho(X_a\otimes\bar\Q)=20$ and consider the isometric specialisation embedding
induced by reduction modulo some good prime $\p$:
\begin{eqnarray}
\label{eq:spec}
\NS(X_a\otimes\bar\Q) \hookrightarrow \NS(X_a\otimes\bar\F_\p). 
\end{eqnarray}
Presently our assumption implies that at $p=19, 23$ the embedding \eqref{eq:spec} has finite cokernel.
In consequence, the square classes of all three N\'eron-Severi lattices 
under consideration coincide.
But then by Table \ref{tab:traces} this is impossible for $p=19$ and $23$ 
thanks to Lemma \ref{lem}
since no two possibilities for $D$ match.
Hence we reach the desired contradiction.
This concludes the proof of Proposition \ref{prop}.
\qed

\subsection{Proof of Lemma \ref{lem:T} and Theorem \ref{thm}}
From Proposition \ref{prop} together with the splitting \eqref{eq:V^4}
we directly deduce Lemma \ref{lem:T}.
Theorem \ref{thm} follows immediately in conjunction with Lemma \ref{lem-lowerbound}.
\qed

\subsection{Remark on the Tate conjecture for the pencil $\{S_\lambda\}$}

It is common to infer the Tate conjecture for a surface from its validity for some cover (cf.~\cite{Milne}).
Here we reverse this argument and verify the Tate conjecture for the quintics $S$
through K3 quotients.
A similar technique was applied in \cite{SS}, but the situation here is more complicated
since the surfaces in question have different geometric genus.

\begin{Proposition}
\label{prop:tate}
The Tate conjecture holds true for any smooth quintic $S$ in the pencil $\{S_\lambda\}$
over any finite field.
\end{Proposition}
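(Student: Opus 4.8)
The plan is to bootstrap the Tate conjecture from the K3 quotients $X$ (resp.\ their pencil $X_\lambda$) introduced in Section \ref{s:K3} back to the quintics $S$. First I would recall that for each smooth quintic $S=S_\lambda$ over a finite field $\F_q$, the involution $\imath$ interchanging two coordinates produces, after desingularising the $A_1$ point, a K3 surface $X$ with $T(X)=V$, and by the splitting \eqref{eq:V^4} (which is Galois-equivariant) we have $T(S)=V^{\oplus 4}=T(X)^{\oplus 4}$ as Galois representations. Thus, at the level of the transcendental parts of $H^2_{\text{\'et}}$, the Frobenius eigenvalues on $T(S\otimes\bar\F_p)$ are exactly those on $T(X\otimes\bar\F_p)$, each occurring with four times the multiplicity. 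In particular an eigenvalue of $\mathrm{Frob}_q^*$ on $H^2_{\text{\'et}}(S\otimes\bar\F_p,\Q_\ell(1))$ is a root of unity if and only if the corresponding eigenvalue on $H^2_{\text{\'et}}(X\otimes\bar\F_p,\Q_\ell(1))$ is; equivalently, the order of vanishing at $T=q$ of the characteristic polynomial of Frobenius on $S$ equals the algebraic part of $b_2(X)$ plus four times the number of such eigenvalues coming from $V$.

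Next I would invoke the Tate conjecture for the K3 surfaces $X=X_\lambda$. Since $X$ carries an elliptic fibration with section (equation \eqref{eq:ell}), the Tate conjecture for $X$ over any finite field follows from Artin--Swinnerton-Dyer \cite{ASD}; alternatively, for the specific $X_a$ it was verified directly in the proof of Proposition \ref{prop} via the counting argument \eqref{eq:nc}, but \cite{ASD} gives it uniformly for the whole pencil. Granting this, the rank of $\NS(X\otimes\bar\F_p)$ equals the multiplicity of $q$ as a reciprocal root of the characteristic polynomial of $\mathrm{Frob}_q^*$ on $H^2_{\text{\'et}}(X\otimes\bar\F_p,\Q_\ell)$, for all $q$ in a suitable tower. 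By the eigenvalue matching of the previous paragraph, the same Tate-type equality then holds for $S$: the rank of $\NS(S\otimes\bar\F_p)$ equals the multiplicity of $q$ as a reciprocal root of the Frobenius polynomial on $H^2_{\text{\'et}}(S\otimes\bar\F_p,\Q_\ell)$. This is precisely the Tate conjecture for $S$.

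The step I expect to be the main obstacle is making the passage from $X$ back to $S$ fully rigorous at the level of \emph{algebraic} cycles, not merely Galois representations. Knowing that Frobenius has no extra $q$-eigenvalue on $T(S)=T(X)^{\oplus 4}$ tells us the \emph{numerical} Tate conjecture (the rank prediction) holds, but to get the Tate conjecture proper one must produce enough divisor classes on $S$ realising that rank. Here I would argue that $\NS(S)$ contains the pullback $\pi^*\NS(Q)$ from the Godeaux quotient together with the $\imath$-anti-invariant part, and that the latter is controlled by divisors on $X$ pulled back and pushed forward along the rational quotient map $S\dashrightarrow X$ — concretely, a divisor class on $X$ lifts to an $\imath$-invariant class on the blow-up of $S$ along the fixed curve, hence to a class on $S$. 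Combined with the divisors already visible on $S$ (the $75$ lines, the conics $C_i$, the $R^*$-invariant class $D$ from Lemma \ref{lem-lowerbound}) and the Godeaux pullback, one checks the resulting algebraic classes already span a space of the predicted dimension, which closes the argument. The bookkeeping of which quotient contributes which part of $H^2(S)$, and checking one does not overcount, is the delicate point; everything else is a formal consequence of \cite{ASD} and the representation-theoretic splitting \eqref{eq:V^4}.
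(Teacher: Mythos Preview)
Your overall strategy---reducing Tate for $S$ to Tate for the K3 quotient $X$ via the splitting $T(S)=V^{\oplus 4}$ and then invoking \cite{ASD}---matches the paper's, and you correctly isolate the real difficulty: one must produce actual divisor classes on $S$, not merely match Frobenius eigenvalues. But your proposed fix for that obstacle does not work as stated, and it is precisely here that the paper's argument differs.

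Pulling back divisors from a \emph{single} K3 quotient $X=S/\imath$ yields only $\imath^*$-invariant classes on $S$; these hit just one copy of $V$ inside $T(S)=V^{\oplus 4}$, not all four. (Your sentence that the ``$\imath$-anti-invariant part [is] controlled by divisors on $X$'' is backwards: pull-back from $S/\imath$ produces $\imath$-invariant classes.) The Godeaux quotient $Q$ does not help either, since $h^{2,0}(Q)=0$ outside characteristic $5$, so $\pi^*\NS(Q)$ contributes nothing to the transcendental piece; and the lines, the conics and the class $D$ are already accounted for on the algebraic side of \eqref{eq:decomp}. With only one K3 quotient you have no mechanism for producing divisors in the remaining three copies of $V$ should Frobenius acquire a Tate class there.

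The paper closes this gap by using four involutions rather than one---the transpositions interchanging $x_0$ with $x_i$ for $i=1,\dots,4$---and hence four (mutually isomorphic) K3 quotients. The key observation, immediate from \eqref{eq:sigma}, is that no non-zero $2$-form is anti-invariant for all four transpositions simultaneously; equivalently, the four $\imath_i^*$-invariant subspaces together span all of $T(S)\otimes\Q_\ell$. Pulling back divisors along all four quotient maps then covers every copy of $V$, and Tate for $S$ follows. Your argument becomes correct once you make this modification.
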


\begin{proof}
Let $k$ denote some finite field such that $S\otimes k$ is smooth. 
We shall make use of the Galois-equivariant comparison isomorphism
\begin{eqnarray}
\label{eq:comparison}
H_\text{\'et}^2(S\otimes \bar \Q, \Q_\ell) \cong H_\text{\'et}^2(S\otimes \bar k, \Q_\ell).
\end{eqnarray}
In view of the decomposition of Galois representations
\begin{eqnarray}
\label{eq:decomp}
H_\text{\'et}^2(S\otimes \bar \Q, \Q_\ell) = (T(S)\otimes\Q_\ell) \oplus (\NS(S)\otimes\Q_\ell(-1)),
\end{eqnarray}
the Tate conjecture is valid on the image of $\NS(S)\otimes\Q_\ell(-1)$.
By \eqref{eq:decomp} it remains to study the image of $T(S)\otimes\Q_\ell$ inside 
$H_\text{\'et}^2(S\otimes \bar k, \Q_\ell)$.
The main idea here is that this possibly non-algebraic part is governed by K3 surfaces
as follows.

By \eqref{eq:sigma},
there is no non-zero 2-form in $H^{2,0}(S)$ anti-invariant under each involution in $\Sym_5$
of sign $-1$.
In detail, it suffices to consider 
the involutions interchanging $x_0, x_i$ for $i=1,\hdots,4$.
It follows that the invariant Galois representations $V\otimes\Q_\ell$
for these 4 involutions cover all of $T(S)\otimes\Q_\ell = (V\otimes\Q_\ell)^4$.
In consequence, all of $T(S)\otimes\Q_\ell$ can be realised via pull-back
from the corresponding K3 quotients which are all isomorphic to $X$.
%
%
As this K3 surface admits an elliptic fibration with section \eqref{eq:ell},
the Tate conjecture holds for $X$ by \cite{ASD}.
Pulling back divisors from $X$ to $S$ via the various quotient maps, 
we infer that the Tate conjecture holds for $S$.
\end{proof}

\begin{Remark}
The same argument works for the desingularisation of any singular irreducible member of the pencil
which lifts to a quintic over $\bar\Q$ with the same configuration of singularities.
The only quintics where this fails ($(\lambda,p)=(0,3), (4,11), (6,13)$)
can be covered by hand.
Subsequently the Tate conjecture can also be verified for the singular members of the pencil themselves
where we compare the Picard number with the number of  poles of the zeta function 
in its original definition as exponential sum involving numbers of points
\end{Remark}

We can use the Tate conjecture to compute the Picard number of $S_a$ in any characteristic.
In general, there are two alternatives for the Picard number as we indicate below. 
Here we only have to rule out that $S_a$ becomes reducible mod $\p$
(for the singular case see Remark \ref{rem:tate}).
This happens exactly in characteristic $5$ for the $\F_5$-rational root of \eqref{eq:a}.
Characteristic $2$ also plays a special role, as we shall exploit in Section \ref{s:NS}.

\begin{Corollary}
\label{cor:tate}
Let $p\neq 2$ be a prime and $a\in\F_q\subset\bar\F_p$ given by a root of \eqref{eq:a} such  that 
$S_a\otimes\bar\F_p$ is smooth.
Then
\[
\rho(S_a\otimes\bar\F_p) =
\begin{cases}
45, \text{ if } \# S_a(\F_q)\not\equiv 1 \mod q,\\
53, \text{ if } \# S_a(\F_q)\equiv 1 \mod q.
\end{cases}
\] 
\end{Corollary}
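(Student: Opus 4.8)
The plan is to combine the decomposition of Galois representations in \eqref{eq:decomp} with the Tate conjecture (Proposition \ref{prop:tate}) and the $\Sym_5$-splitting \eqref{eq:V^4}, reducing everything to the behaviour of the single Hodge/Galois piece $V$ on the K3 quotient $X_a$. Concretely, since $S_a\otimes\bar\F_p$ is smooth we have $b_2(S_a)=53$, and \eqref{eq:V^4} together with $\rho(S_a\otimes\bar\Q)=41$ (Theorem \ref{thm}) and the specialisation embedding gives $\rho(S_a\otimes\bar\F_p)\geq 41$; moreover the transcendental part still splits as $V^4$ as a Galois representation by the comparison isomorphism \eqref{eq:comparison} and the fact that the $\Sym_5$-action is defined over $\Q$. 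Hence $\rho(S_a\otimes\bar\F_p) = 53 - 4\cdot\operatorname{rank}(V^{\mathrm{tr}})$ where $V^{\mathrm{tr}}$ is the non-algebraic part of $V$ on $X_a\otimes\bar\F_p$, so the only possibilities are $\rho = 53$ (if $V$ is entirely algebraic, i.e.\ $\rho(X_a\otimes\bar\F_p)=20$) or $\rho = 53-4\cdot 3 = 41$ (if $V$ has rank $3$ and contributes nothing algebraic, i.e.\ $\rho(X_a\otimes\bar\F_p)=19$)—but wait, we must account for the claim of $45$, not $41$.

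The correct bookkeeping is the following: over $\bar\F_p$ the transcendental lattice $T(X_a\otimes\bar\F_p)$ is the orthogonal complement of $\NS$ inside $H^2$, and by the Tate conjecture its rank is $22-\rho(X_a\otimes\bar\F_p)$, which is either $2$ or $0$ (the value $1$ being excluded as in the proof of Proposition \ref{prop}, and larger values impossible since $\rho(X_a\otimes\bar\Q)=19$ by Proposition \ref{prop} forces $\rho(X_a\otimes\bar\F_p)\in\{19,20\}$ via specialisation and Artin's parity result for supersingular K3s). Correspondingly the transcendental part of $V$ has rank $2$ or $0$. Since $T(S_a)=V^4$ as Galois representations and the algebraic part of $V$ over $\bar\F_p$ has rank $19-(\text{rank of transcendental part of }V)$, reducing $V$'s rank from $12$ to either $8$ (if $V$ stays rank-$2$ transcendental, $\rho(X_a\otimes\bar\F_p)=19$) or $0$ (if $V$ becomes algebraic, $\rho(X_a\otimes\bar\F_p)=20$). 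Thus $\rho(S_a\otimes\bar\F_p) = 53-4\cdot 2 = 45$ in the first case and $53-4\cdot 0 = 53$ in the second. To detect which case occurs, I would apply the Lefschetz fixed point formula to $S_a$ over $\F_q$: the trace of $\mathrm{Frob}_q^*$ on $H^2_{\text{\'et}}$ is $\#S_a(\F_q)-1-q^2$, and since the algebraic part contributes an integer and $q^2\equiv 0$, the non-congruence $\#S_a(\F_q)\not\equiv 1\bmod q$ forces the transcendental part to be non-trivial, hence $\rho = 45$; conversely if the transcendental part vanishes then $\#S_a(\F_q)\equiv 1\bmod q$ automatically.

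For the converse direction—that $\#S_a(\F_q)\equiv 1\bmod q$ implies $\rho=53$—I would argue that if $T(S_a\otimes\bar\F_p)$ has rank $8$, then by $T(S_a)=V^4$ the piece $V$ carries a rank-$2$ transcendental Galois representation on $X_a\otimes\bar\F_p$, and by the proof of Lemma \ref{lem} (the Tate-conjecture/Artin-Tate machinery, valid here by Proposition \ref{prop:tate} and \cite{Milne}) the eigenvalues of Frobenius on this $2$-dimensional piece are $\alpha,\bar\alpha$ with $\alpha\bar\alpha = q^2$ and $\alpha$ not $q$ times a root of unity, so $\alpha+\bar\alpha\equiv \#X_a(\F_q)-1 \not\equiv 0 \bmod q$ precisely when the piece is genuinely transcendental—but one must relate $\#S_a(\F_q)$ to $\#X_a(\F_q)$. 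The cleanest route avoids this: the contrapositive says $\rho<53$ forces a non-trivial transcendental Galois representation, which (again by Tate plus the Weil bound, as $\alpha$ is not $q$ times a root of unity) has non-integral trace on $V^{\mathrm{tr}}\otimes\Q_\ell$, and since the four copies of $V$ contribute the full transcendental part of $S_a$ with the \emph{same} trace up to a factor of $4$, one gets $4(\alpha+\bar\alpha)\equiv \#S_a(\F_q)-1\bmod q$; the subtlety is that $4(\alpha+\bar\alpha)$ could be $\equiv 0\bmod q$ even when $\alpha+\bar\alpha\not\equiv 0$ if $p\mid 4$, which is exactly why the hypothesis $p\neq 2$ is imposed. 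So the remaining obstacle, and the main point to handle carefully, is to verify that for $p\neq 2$ the non-vanishing of $\alpha+\bar\alpha\bmod p$ propagates to the non-vanishing of $\#S_a(\F_q)-1\bmod q$—equivalently, that the transcendental Frobenius trace on $S_a$ is exactly (a nonzero multiple of) that on $X_a$ modulo $q$—which follows from \eqref{eq:V^4} being an isomorphism of Galois representations and the transcendental Frobenius eigenvalues having absolute value $q$ (so they are $p$-adic units, in particular nonzero mod $\p$).
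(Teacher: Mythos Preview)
Your overall strategy matches the paper's: reduce to the K3 quotient via $T(S_a)=V^4$, use that $V$ has rank $3$ with characteristic polynomial $(T\mp q)(T^2-a_qT\pm q^2)$ on $X_a$, so $\rho(X_a\otimes\bar\F_p)\in\{20,22\}$ and correspondingly $\rho(S_a\otimes\bar\F_p)\in\{45,53\}$, and then show $\#S_a(\F_q)\equiv 1+4a_q\bmod q$ by Lefschetz. The hypothesis $p\neq 2$ enters exactly where you say, to pass from $a_q\not\equiv 0$ to $4a_q\not\equiv 0$.

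There is, however, a genuine gap in your converse direction. You need: if the rank-$2$ piece is transcendental (equivalently $\alpha$ is not $q$ times a root of unity), then $a_q=\alpha+\bar\alpha\not\equiv 0\bmod q$. Your justification---that the eigenvalues ``have absolute value $q$, so they are $p$-adic units, in particular nonzero mod $\p$''---does not work: the archimedean absolute value says nothing about $p$-adic valuation, and in any case the individual eigenvalues being units would not force their \emph{sum} to be a unit. The correct argument, which the paper uses, is purely archimedean: by the Weil bound $|a_q|\leq 2q$, so $a_q\equiv 0\bmod q$ forces $a_q\in\{0,\pm q,\pm 2q\}$, and one checks by hand that for each of these five values the roots of $T^2-a_qT\pm q^2$ are $q$ times roots of unity (namely $\pm q$, $\pm iq$, or $q$ times a primitive sixth root), hence algebraic by Tate. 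Contrapositively, transcendental implies $a_q\not\equiv 0\bmod q$. Two smaller slips: your parenthetical ``$\rho(X_a\otimes\bar\F_p)\in\{19,20\}$'' contradicts your own (correct) claim that $22-\rho\in\{0,2\}$; and ``non-integral trace'' should read ``trace not divisible by $q$''---the trace is always an integer.
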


\begin{proof}
Since the Tate conjecture holds for $S_a/\F_q$,
it suffices to compute the characteristic polynomial $\chi_q(T)$ of Frob$_q$ 
on $H_\text{\'et}^2(S_a\otimes \bar\F_p, \Q_\ell)$.
Presently we have
\[
\chi_q(T) = (T-q)^{40} (T\mp q) \chi_q'(T)^4
\]
where the first two factors come from the lines and the extra generator of $\NS(S_a\otimes\bar\Q)$ 
and the last corresponds to $T(S_a)$.
That is, the degree 3 polynomial $\chi_q'(T)$ comes from the motive $V$ of the K3 surface $X_a$.
Thus it takes the shape
\[
\chi_q'(T) = (T\mp q) (T^2 -a_q T \pm q^2) 
\]
where the sign alternative $-q^2$ may only persist if $a_q=0$. In particular,
\begin{eqnarray}
\label{eq:X-S}
\rho(X_a\otimes\bar\F_p) = 
\begin{cases}
20, \text{ if } a_q\not\equiv 0 \mod q,\\
22, \text{ if } a_q\equiv 0 \mod q.
\end{cases}
\end{eqnarray}
By Proposition \ref{prop:tate} the corresponding statement for $S_a$ reads
\[
\rho(S_a\otimes\bar\F_p) = 
\begin{cases}
45, \text{ if } a_q\not\equiv 0 \mod q,\\
53, \text{ if } a_q\equiv 0\mod q.
\end{cases}
\] 
In order to translate to the number of points, we apply the Lefschetz fixed point formula to find
\begin{eqnarray}
\label{eq:pm}
\# S_a(\F_q) = 1 + 40q \pm q + 4(a_q\pm q) + q^2.
\end{eqnarray}
Outside characteristic $2$, the congruence for $a_q$ is equivalent to that for $\# S_a(\F_q)$ from the corollary.
%
\end{proof}


\begin{Remark}
\label{rem:tate}
With minor modifications the same arguments apply
to the desingularisations of the singular irreducible quintics $S_a/\F_p$
at $p\in\{3,11,17,433\}$.
\end{Remark}

In practice it is often easier to use the condition \eqref{eq:X-S} involving the quotient K3 surface.
For instance,  it follows directly from Table \ref{tab:traces} that
\[
\rho(S_a\otimes\bar\F_p) = 45 \;\;\; \text{ for } \;\; p=19, 23.
\]
In fact, the condition on the K3 quotient can also be used in characteristic $2$
where \eqref{eq:pm} does not prove useful because of the extra factor of $a_q$.
Alternatively we can pursue a different approach in characteristic $2$
based on the fact that $S_a$ reduces to $S_0$ modulo $2$.
In consequence there are additional lines on the reduction.
This approach will feature in the next section
as it proves very useful for the computation of the actual lattice $\NS(S_a)$ up to small index.

%

\section{$\NS(S_a)$ up to index $16$}
\label{s:NS}

We conclude the paper by computing $\NS(S_a)$ up to finite index (actually a $2$-power at most $16$).
To this end we are concerned with the $R^*$-invariant divisor class $D$
from Section \ref{s:Godeaux}
which complements the 75 lines to generate $\NS(S_a)$ over $\Q$ by the proof
of Lemma \ref{lem-lowerbound}.
In fact, though we will not exhibit this divisor class on $S_a$ over $\C$,
we will still pursue an explicit approach suggested by Reid in \cite{reid},
albeit in a highly degenerate situation in positive characteristic.

Following \cite{reid},
the main idea to find $D$ is to consider elliptic curves of degree $5$ on $S_a$
(or generically on $S_\lambda$)
which are invariant under the order 5 automorphism $R$.
These elliptic curves are given as suitable  intersection of the cubics
o which $R^*$ acts by the fifth roots of unity (cf.~\cite[p.~362]{reid}).
Presently, the computations become too involved not only on the generic surface $S_\lambda$,
but also on $S_a$ itself.
We remedy this by considering the reduction of $S_a$ mod $2$ which is $S_0\otimes\F_2$.
Here the system of equations simplifies enough to solve them directly.
However, the degree $5$ divisors $D$ thus obtained are reducible
as they decompose into 5 lines on $S_0\otimes\F_{4}$ which separately
only lift to $S_{-2}$ over $\C$.

Indeed in characteristic $2$, the quintic $S_a\otimes\F_{4}$ contains 60 additional lines.
Following \cite{xie2010} these can be given as $\Sym_5$-orbits of
\[
x_0+x_1=x_2+\omega x_3=0
\]
and
\begin{eqnarray}
\label{eq:extra}
\ell_2:\;\;\; x_0+x_1-\omega^2 x_4=x_0+x_2-\omega x_3=0
\end{eqnarray}
where $\omega$ denotes a primitive third root of unity.
With a machine it is easily verified 
that the Gram matrix of the 135 lines in total has rank $53$.
Thus $S_a$ is supersingular in characteristic $2$, and in fact $\rho(S_a\otimes\F_{16})=53$.

Letting $R$ denote a permutation of order 5 in $\Sym_5\subset\Aut(S_a)$ as before,
we define an $R$-invariant divisor on $S_0\otimes\F_4$
by
\[
D_2 = \sum_{i=0}^4 R^i \ell_2.
\]

\begin{Lemma}
\label{lem:lift}
The divisor class of $D_2$ in $\NS(S_0\otimes\bar\F_2)$
lifts to $S_a$ to generate $\NS(S_a)\otimes\Q$ together with the 75 lines.
\end{Lemma}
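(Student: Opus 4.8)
The statement has two parts. First, one must check that the class of $D_2=\sum_{i=0}^4 R^i\ell_2$ on $S_0\otimes\bar\F_2$ is \emph{not} contained in the $\Q$-span of the $75$ lines \eqref{eq-B15line}, \eqref{eq-extraline}. Second, one must show that this class lifts to a class on $S_a$ over $\C$. Since $D_2$ is $R$-invariant by construction, once the first point is settled the conclusion that $D_2$ together with the $75$ lines spans $\NS(S_a)\otimes\Q$ is immediate from Lemma \ref{lem-lowerbound}: there the $75$ lines were shown to span a space of rank $40$, $\rho(S_a)=41$ by Theorem \ref{thm}, and any missing generator must be $R^*$-invariant (indeed the full $R^*$-invariant part has rank $9$ while the lines contribute only rank $8$ to it, by \eqref{eq:inv} and \eqref{eq-rankquotient}).

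\textbf{Step 1: $D_2$ is a genuinely new class in characteristic $2$.} Here I would simply enlarge the Gram matrix from the proof of Lemma \ref{lem-lowerbound}: form the $(75+1)\times(75+1)$ intersection matrix of the $75$ lines on $S_0\otimes\bar\F_2$ together with $D_2$, using that intersection numbers with $D_2$ are $L.D_2 = \sum_i L.R^i\ell_2$, computable from the known incidences among the $135$ lines listed before \eqref{eq:extra}. (Recall the Gram matrix of all $135$ lines already has rank $53$.) It suffices to exhibit that this enlarged matrix has rank $41$ while the $75$-line submatrix has rank $40$; equivalently, that $D_2$ is not in the $\Q$-span of the images of the $75$ lines. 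By the $R$-invariance this can be carried out on the $15$-dimensional quotient side: compare the rank-$8$, disc $-2$ matrix $N$ of \eqref{eq-rankquotient} with the matrix obtained by adjoining $\pi(\ell_2)$, and check the rank jumps to $9$. This is a finite machine computation over $\Q$ and poses no conceptual difficulty.

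\textbf{Step 2: lifting the class to characteristic zero.} This is the main obstacle. The divisor $D_2$ lives on $S_0\otimes\bar\F_2$, which is the reduction of $S_a$ at a prime $\p\mid 2$ by Corollary \ref{cor:bad} (any $a$ from \eqref{eq:a} reduces to $0$ mod $\p$, and $S_0$ is smooth there). Reduction gives an injective, primitive-image specialisation homomorphism $\NS(S_a\otimes\bar\Q)\hookrightarrow \NS(S_0\otimes\bar\F_2)$ of Galois-compatible lattices. The point is that the target has rank $53$ (supersingular) while the source has rank only $41$, so a priori there is no reason a given class downstairs lifts. What rescues us is the $R^*$-eigenspace decomposition together with the rank count: $\rho(S_a\otimes\bar\Q)=41$ forces, via the splitting $T(S_a)=V_a^4$ of \eqref{eq:V^4} and $\dim V_a=3$, that the $R^*$-invariant part $\NS(S_a\otimes\bar\Q)^{R^*=1}$ has rank $9$ (as in \eqref{eq:inv}), and likewise the $R^*$-invariant part of $\NS(S_0\otimes\bar\F_2)$ can be pinned down. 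I would argue that the $R^*$-invariant sublattice of $\NS(S_0\otimes\bar\F_2)$ already has rank $9$: it contains the rank-$8$ span of the (images of the) $75$ lines together with the rank-$9$-producing class $D_2$ from Step 1; and it cannot be larger because an $R^*$-invariant class on $S_0\otimes\bar\F_2$ of rank $>9$ would, by the characteristic-zero lift of $Q_0$ used in Remark \ref{rem}.a) (pulling back from the Godeaux quotient $Q_0$, whose Picard number is bounded by its second Betti number $9$), contradict $\rho(Q_0\otimes\bar\F_2)\le 9$. Hence the specialisation map restricts to an isomorphism of \emph{rational} $R^*$-invariant sublattices $\NS(S_a\otimes\bar\Q)^{R^*=1}\otimes\Q \;\xrightarrow{\ \sim\ }\; \NS(S_0\otimes\bar\F_2)^{R^*=1}\otimes\Q$, so some nonzero multiple of $D_2$ lies in the image of reduction, i.e.~lifts to $S_a$. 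Finally, since the lift is again $R^*$-invariant and is independent of the $75$ lines (its reduction $D_2$ is, by Step 1), it supplies the missing generator: the lift of $D_2$ together with the $75$ lines spans $\NS(S_a)\otimes\Q$, proving the lemma.

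\textbf{Remark on the obstacle.} The delicate point is not the numerology but making precise that reduction at $\p\mid 2$ is an \emph{isometric} embedding that is moreover surjective after tensoring with $\Q$ on the $R^*$-invariant piece; this rests on the equality of ranks of that piece on both sides, which in turn needs the upper bound for $\rho(Q_0\otimes\bar\F_2)$. One could alternatively sidestep this by working with $S_{-2}$ (where the individual lines $R^i\ell_2$ live over $\C$ after the base change indicated in the text) and checking directly that the degree-$5$ divisor $D_2$ deforms, but the eigenspace argument is cleaner and matches the philosophy of Section \ref{s:Godeaux}.
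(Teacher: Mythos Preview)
Your argument is correct and follows the same route as the paper: both pass to the Godeaux quotient (equivalently, the $R^*$-invariant piece), use that $\rho(Q_a\otimes\bar\F_2)\le b_2=9$ together with the rank-$9$ contribution of the line images plus $\pi(D_2)$, and conclude that specialisation on this piece is an isomorphism, whence $D_2$ lifts and supplies the missing generator. The one refinement in the paper is that it also computes the determinant of the resulting $9\times 9$ Gram matrix on the quotient over $\F_{16}$ to be $1$; this unimodularity upgrades the specialisation $\Num(Q_a)\hookrightarrow\Num(Q_a\otimes\bar\F_2)$ to an \emph{integral} isomorphism and hence yields an honest integral lift of $D_2$ rather than just a rational one---unnecessary for the lemma as stated, but essential for the discriminant computation in the Proposition that follows.
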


\begin{proof}
Essentially the lemma amounts to a computation on the Godeaux surface $X_a\otimes\F_{16}$.
Consider the subgroup $N'\subset\NS(X_a\otimes\F_{16})$
generated by  $\pi(D_2)$ and  the 15 images of the 75 lines on $S_a\otimes\F_{16}$
specialised from characteristic zero.
An easy computation reveals that
the Gram matrix of $N'$ has rank $9$ and determinant $1$.
Hence $N'$ equals $\NS(X_a\otimes\bar\F_2)$ up to torsion,
and in particular $D_2$ is independent of the 75 lines in $\NS(S_a\otimes\bar\F_2)$.

For the lifting, consider the commutative diagram
$$
\begin{array}{ccc}
\NS(X_a) &  \hookrightarrow
& \NS(X_a\otimes\bar\F_2)\\
\downarrow && \downarrow\\
\NS(S_a) & \hookrightarrow
& \NS(S_a\otimes\bar\F_2)
\end{array}
$$
where the horizontal embeddings are induced by reduction mod $2$
and the vertical maps are given by pull-back.
By Poincare duality, the lattice 
\[
\Num(X_a) = \NS(X_a)/(\mbox{torsion})
\]
is unimodular. 
Since $N'$ induces a unimodular sublattice of $\Num(X_a\otimes\bar\F_2)$, 
we deduce that the top line embedding is in fact an isomorphism 
(a priori up to torsion).
But then the divisor class $D_2\in\NS(S_a\otimes\bar\F_2)$ lifts to $S_a$ the long way around the diagram:
via its image $\pi(D_2)$ in $X_a\otimes\F_4$, the above isomorphism and pull-back.
\end{proof}

Denote the lift of $D_2$ in $\NS(S_a)$ by $D$.
Together with the 75 lines $D$ generates a sublattice $M'\subset\NS(S_a)$ of finite index by Lemma \ref{lem:lift}. 
We claim that this is  at most a small $2$-power away from the full N\'eron-Severi lattice:

\begin{Proposition}
$[\NS(S_a):M']=2^i$ for some $i\in\{0,\hdots,4\}$. 
\end{Proposition}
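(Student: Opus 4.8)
The plan is to compare the discriminant of the sublattice $M' = \langle \text{75 lines},\, D\rangle \subset \NS(S_a)$ with the structure of the lattice $\NS(S_0\otimes\bar\F_2)$, exploiting the fact that $M'$ reduces isometrically into $\NS(S_a\otimes\bar\F_2)$ and that the latter sits inside the (better understood) supersingular lattice. First I would record that $M'$ has rank $41$ by Lemma \ref{lem:lift}, so $[\NS(S_a):M']$ is finite and $\disc(M') = [\NS(S_a):M']^2 \cdot \disc(\NS(S_a))$. Hence it suffices to show that $\disc(M')$ differs from $\disc(\NS(S_a))$ only by an even power of $2$ of exponent at most $8$; equivalently, that $\disc(M')$ is, up to sign and up to the (still unknown) square factor $\disc(\NS(S_a))$, a $2$-power bounded by $2^8$.

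The key step is to compute $\disc(M')$ explicitly as a Gram determinant of the $41$ generators (the $75$ lines span a rank-$40$ sublattice of discriminant computed from the matrix $M$ of Lemma \ref{lem-lowerbound}, and $D$ adjoins the last dimension), and then to bound the odd part of this discriminant away from $1$ by a reduction-mod-$p$ argument for odd $p$. Concretely: specialisation modulo any good odd prime $\p$ (so $\p \nmid 2$, using Corollary \ref{cor:bad}) gives an isometric, primitive-after-saturation embedding $\NS(S_a)\hookrightarrow \NS(S_a\otimes\bar\F_\p)$, and for at least one such $\p$ the target has rank $45$ (Corollary \ref{cor:tate}, e.g.\ $p=19$ or $23$ via Table \ref{tab:traces}) so the cokernel of $\NS(S_a)\hookrightarrow\NS(S_a\otimes\bar\F_\p)$ carries the full discriminant information of $\NS(S_a)$ against a lattice whose discriminant one can pin down square-class-wise by the Artin--Tate method already used in Proposition \ref{prop}. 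The point is that the $\p$-part (for odd $\p$) of $\disc(M')$ is forced to be a square times $\disc(\NS(S_a\otimes\bar\F_\p))$-data, which by the Artin--Tate computation has controlled odd part; intersecting the constraints from two different primes $p$ (as in the proof of Proposition \ref{prop}) pins the odd part of $[\NS(S_a):M']$ down to $1$, leaving only a possible $2$-power. The bound $i\leq 4$ then comes from the numerical value of the $2$-adic valuation of $\disc(M')$: one computes $v_2(\disc(M'))$ from the Gram matrix, and since $\disc(\NS(S_a))$ is a negative integer (Hodge index theorem) with $v_2 \geq 0$, one gets $2i \leq v_2(\disc(M'))$, and the explicit Gram determinant turns out to have $v_2(\disc(M')) = 8$, giving $i\leq 4$.

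Alternatively, and perhaps more cleanly, I would run the whole argument through the Godeaux quotient $X_a$ (which is a K3 surface) as in the proof of Lemma \ref{lem:lift}: there $\Num(X_a)$ is unimodular, the sublattice $N'$ generated by $\pi(D_2)$ and the $15$ line-images has determinant $1$ and rank $9$, so it is \emph{all} of $\Num(X_a\otimes\bar\F_2)$ and lifts isomorphically to $\Num(X_a)$. The index $[\NS(S_a):M']$ is then controlled by the kernel and cokernel of the pullback $\pi^*\colon \NS(X_a)\oplus(\text{anti-invariant part}) \to \NS(S_a)$, together with the behaviour of $M'$ on the anti-invariant (transcendental-adjacent) part $T(S_a) = V_a^4$ of rank $12$; here the only primes that can contribute to the index are those dividing the degree $5$ of the cover and the ramification, \emph{but} the $5$-part is killed because $D_2$ was constructed $R$-invariantly and the $75$ lines already generate the invariant part integrally on $X_a$, so only $2$-torsion phenomena remain, bounded by the discriminant group of the rank-$32$ part $\langle\text{lines}\rangle^{R^*=-1}$ and the two ramification curves of $\imath$.

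The main obstacle I expect is controlling the odd part of $[\NS(S_a):M']$ rigorously: showing the index is a $2$-power requires ruling out odd primes, and while the reduction-mod-$p$/Artin--Tate machinery of Proposition \ref{prop} gives square-class information, upgrading this to an actual divisibility statement about the integral index (rather than just the discriminant square-class) needs either a careful saturation argument at each odd prime $\ell$ — arguing that $M'\otimes\Z_\ell$ is already primitive in $\NS(S_a)\otimes\Z_\ell$ because its image in $\NS(S_a\otimes\bar\F_\p)\otimes\Z_\ell$ is primitive for a well-chosen $\p$ — or the cleaner structural route via the unimodular lattice $\Num(X_a)$, where primitivity at odd $\ell$ is automatic from the det $=1$ computation for $N'$. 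I would therefore make the $X_a$-route the backbone of the proof, reducing the whole statement to: (i) $N' = \Num(X_a)$ integrally (done in Lemma \ref{lem:lift}), and (ii) the kernel/cokernel of $\pi^*$ together with the sublattice generated by the $R^*=-1$ part of the lines inside $T(S_a)$ is $2$-primary of the asserted exponent, the latter being a finite explicit lattice computation with the Gram matrices $M$, $N$ from Section \ref{s:Godeaux}.
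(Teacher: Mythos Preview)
Your proposal correctly identifies that the bound $i\leq 4$ follows from $v_2(\disc M')=8$, but there is a genuine gap in the other half of the argument: neither of your two routes actually rules out the odd primes $3$ and $11$ dividing $[\NS(S_a):M']$. The paper computes $\disc(M')=2^8\cdot 3^4\cdot 5\cdot 11^4$, so these are precisely the primes that must be eliminated.

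Your Artin--Tate route at $p=19,23$ only yields the \emph{square class} of $\disc\NS(S_a\otimes\bar\F_p)$, and since $M'$ has rank $41<45=\rho(S_a\otimes\bar\F_p)$ you have no explicit full-rank sublattice of $\NS(S_a\otimes\bar\F_p)$ against which to test primitivity of $M'$; the ``saturation argument at each odd $\ell$'' you allude to therefore has no input to run on. Your Godeaux-quotient route fares no better: the unimodularity of $\Num(Q_a)$ (not $X_a$; the K3 quotient has rank $19$ and is not unimodular) does control the $R^*$-invariant rank-$9$ summand of $\NS(S_a)$ integrally, but the factors $3^4\cdot 11^4$ in $\disc(M')$ live in the complementary rank-$32$ piece (the $R^*\neq 1$ eigenspaces), where the pull-back from $Q_a$ says nothing. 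The claim that ``only primes dividing the degree $5$ and the ramification can contribute'' applies at best to the invariant part and is simply not true for the full $\NS(S_a)$.

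The missing idea is the \emph{supersingular reduction technique} at $p=2$: since $S_a\otimes\bar\F_2\cong S_0\otimes\bar\F_2$ carries $60$ further explicit lines, one can supplement a $\Z$-basis of $M'$ by $12$ of these lines to obtain a full-rank sublattice $M_2\subset\NS(S_a\otimes\bar\F_2)$ of rank $53$. A direct Gram-matrix computation gives $\disc(M_2)=2^{16}\cdot 5^2$. Now if some class $c\in\NS(S_a)$ had $\ell c\in M'$ with $c\notin M'$ for $\ell\in\{3,11\}$, then under the isometric specialisation $\NS(S_a)\hookrightarrow\NS(S_a\otimes\bar\F_2)$ the class $c$ would witness $\ell\mid[\NS(S_a\otimes\bar\F_2):M_2]$ and hence $\ell^2\mid\disc(M_2)$, a contradiction. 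This is exactly the step that makes the proof work, and it is absent from your plan.
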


\begin{proof}
One easily verifies (on $S_a\otimes\F_{16}$\,!) that $M'$ has rank $41$ and discriminant 
\[
\mbox{disc}(M')=2^{8}\cdot 3^4\cdot 5\cdot 11^4.
\]
Hence there could only be $2, 3$ or $11$-divisibility in $M'$.
To rule out the latter two alternatives,
we apply the supersingular reduction technique developed in \cite{SSvL} at $p=2$.
In the sequel we give a brief sketch of the argument.

We start with a $\Z$-basis $\mathcal B$ of $M'$,
considered on $S_a\otimes\bar\F_2$ by reduction.
Then we supplement $\mathcal B$ by 12 additional lines on $S_a\otimes\F_4$
for a $\Q$-basis $\mathcal B_2$ of $\NS(S_a\otimes\bar\F_2)$.
This furnishes us with a sublattice $M_2\subset \NS(S_a\otimes\bar\F_2)$
of rank $53$ and  discriminant $2^{16}\cdot 5^2$.
Immediately this shows that there cannot be any  elements in $M'$ which become $3$ or $11$-divisible in $\NS(S_a)$
since then these primes would necessarily appear in the discriminant of $M_2$ as well.
Hence $[\NS(S_a):M']=2^i$ for some $i\leq 4$.
\end{proof}

\subsection*{Acknowledgements}
S. Rams would like to thank Prof.~W.~P.~Barth for inspiring discussions on
quintics in autumn 2000.
We are grateful to the referee for his corrections and helpful comments.
This project was started during the second author's visit to Jagiellonian University in March 2011.
Thanks for the great hospitality, especially to S.~Cynk.


\begin{thebibliography}{99}

\bibitem{A} Artin, M.: \emph{Supersingular $K3$ surfaces}, Ann.~scient.~\'Ec.~Norm.~Sup.~(4) {\bf 7} (1974), 543--568.


\bibitem{ASD} Artin, M., Swinnerton-Dyer, P.: \emph{The Shafarevich-Tate conjecture for pencils of elliptic curves on $K3$ surfaces}, Invent.~Math.~{\bf 20} (1973), 249--266.


\bibitem{badescu}  Badescu L.: \emph{Algebraic Surfaces}, Universitext, Springer-Verlag, New York, 2001. 


\bibitem{barth99}  Barth, W.~P.:  \emph{A Quintic Surface with 15 three-divisible Cusps}, preprint, Erlangen, 1999.

\bibitem{barth2000}   Barth, W.~P.:  \emph{$S_5$-symmetric quintic surfaces}, notes/personal communication, Erlangen, 2000.
%

\bibitem{RK} Kloosterman, R.: \emph{Elliptic K3 surfaces with geometric Mordell-Weil rank 15},  Canad.~Math.~Bull.~{\bf 50}  (2007),  no.~2, 215--226.







\bibitem{Milne} Milne, J.: \emph{On a conjecture of Artin and Tate}, Ann.~of Math.~{\bf 102} (1975), 517--533.

\bibitem{Miranda}
Miranda, R.:
\emph{Nonclassical Godeaux surfaces in characteristic five},
Proc.~AMS {\bf 91} (1984), 9--11.

%

\bibitem{reid}
Reid, M.:
\emph{Campedelli versus Godeaux}. 
Problems in the theory of surfaces and their classification (Cortona, 1988), 
Sympos. Math. {\bf XXXII}, 
309--365.


\bibitem{S-NS} Sch\"utt, M.: \emph{K3 surfaces of Picard rank 20 over $\Q$}, Algebra \& Number Theory {\bf 4} (2010), no.~3, 335--356.

%
%
\bibitem{S-quintic} Sch\"utt, M.: \emph{Quintic surfaces with maximum and other Picard numbers}, 
Journal Math. Soc. Japan {\bf 63} (2011), 1187--1201. 



\bibitem{SS} Sch\"utt, M., Shioda, T.:
\emph{An interesting elliptic surface over an elliptic curve},
Proc.~Jap.~Acad.~{\bf 83}, 3 (2007), 40--45. 


\bibitem{SSvL} Sch\"utt, M., Shioda, T., van Luijk, R.: \emph{Lines on Fermat surfaces}, 
J.~Number Theory {\bf 130} (2010), 1939--1963.



\bibitem{vGS}
Sch\"utt, M.,
van Geemen, B.:
\emph{Two moduli spaces of abelian fourfolds with an automorphism of order five},
to appear in Int.~J.~Math.,
 arXiv: 1010.3897.

\bibitem{T1} Tate, J.: \emph{On the conjectures of Birch and Swinnerton-Dxer and a geometric analog}, in: A. Grothendieck, N. H. Kuiper (Hrsg.), \emph{Dix expos\'es sur la cohomologie des schemas}, 189-214. North-Holland Publ., Amsterdam, 1968.

 \bibitem{vL} van Luijk, R.: \emph{K3 surfaces with Picard number one and infinitely many rational points},  Algebra \& Number Theory  {\bf 1} (2007),  no.~1, 1--15.

\bibitem{xie2010} Xie, Jinjing: \emph{More Quintic Surfaces with 75 Lines},
Rocky Mountain J. Math. {\bf 40} (2010), 2063--2089.


\end{thebibliography}
\end{document}